\documentclass[12pt]{amsart}
\usepackage{
	amsmath,  amssymb,  amsthm,   amscd,
	gensymb,  graphicx, etoolbox, booktabs,
	stackrel, mathtools    
}
\usepackage[usenames,dvipsnames]{xcolor}
\definecolor{darkblue}{rgb}{0,0,0.7}
\definecolor{darkred}{rgb}{0.7,0,0}
\usepackage[colorlinks=true, linkcolor=darkred, citecolor=darkblue, urlcolor=blue, pagebackref=true, breaklinks=true]{hyperref}
\usepackage[capitalise]{cleveref}
\usepackage{placeins}
\usepackage{relsize}
\setlength{\marginparwidth}{2cm}
\usepackage{todonotes}
\usepackage{soul}
\usepackage{enumitem}
\usepackage{setspace}
\usepackage{wrapfig}
\usepackage{etoolbox}
\BeforeBeginEnvironment{wrapfigure}{\setlength{\intextsep}{0pt}}
\onehalfspacing

\usepackage{tikz}
\usetikzlibrary{arrows}
\usetikzlibrary{shapes}
\usepackage{float}
\usepackage{tabularx}
\usepackage{kantlipsum}
\usepackage{array}

\newtheorem{proposition}{Proposition}[section]
\newtheorem{lemma}[proposition]{Lemma}
\newtheorem{theorem}[proposition]{Theorem}

\newtheorem{corollary}[proposition]{Corollary}

\theoremstyle{definition}
\newtheorem{remark}[proposition]{Remark}

\newtheorem{definition}[proposition]{Definition}


\newenvironment{customthm}[1]
{\renewcommand\theinnercustomthm{#1}\innercustomthm\itshape}
{\endinnercustomthm}


\textwidth=16cm
\textheight=21cm
\topmargin=0.5cm
\oddsidemargin=0.05cm
\evensidemargin=0.05cm
\advance\headheight1.15pt

\numberwithin{equation}{section}
\hyphenation{semi-stable}
\setlength{\parskip}{4pt}

\def\K{\mathbb{K}}
\def\G{\mathcal{G}}
\def\C{\mathcal{C}}
\def\B{\mathcal{B}}
\def\P{\mathcal{P}}
\def\E{\mathcal{E}}
\def\N{\mathcal{N}}
\def\D{\mathcal{D}}
\def\Gp{G^{\pi}}
\def\s{\setminus}
\def\d{\mathrm{depth}}
\def\reg{\mathrm{reg}}
\def\lex{\mathrm{lex}}
\def\lcm{\mathrm{lcm}}
\def\set{\mathrm{set}}
\def\hs{\mathrm{HS}}
\def\aim{\mathrm{aim}}
\def\l{\langle}
\def\r{\rangle}
\def\x{\mathbf x}
\def\b{\mathbf b}
\def\Bl{\mathcal{B}_G}
\def\height{\mathrm{ht}}
\def\H{\mathcal{H}}

\begin{document}

\title[Homological shifts of cover ideals]{On the homological shifts of cover ideals of Cohen-Macaulay graphs}

\author{Amit Roy}
\address{Chennai Mathematical Institute, India}
\email{amitiisermohali493@gmail.com}

\author{Kamalesh Saha}
\address{Chennai Mathematical Institute, India}
\email{kamalesh.saha44@gmail.com; ksaha@cmi.ac.in}

\keywords{homological shift ideals, linear quotients, vertex cover ideals, Cohen-Macaulay graphs, chordal graphs, Cameron-Walker graphs}
\subjclass[2020]{Primary: 13D02, 05E40; Secondary: 13F55, 13H10}

\vspace*{-0.4cm}
\begin{abstract}
For a non-negative integer $k$, let $\hs_{k}(J(G))$ denote the $k^{\text{th}}$ homological shift ideal of the vertex cover ideal $J(G)$ of a graph $G$. For each $k\ge 2$, we construct a Cohen-Macaulay very well-covered graph $G_k$ which is both Cohen-Macaulay bipartite and a whiskered graph so that $\hs_{k}(J(G_k))$ does not have a linear resolution. This contradicts several results as well as disproves a conjecture in [J. Algebra, $\mathbf{629}$, (2023), 76-108] and [Mediterr. J. Math., $\mathbf{21}$, 135 (2024)]. The graphs $G_k$ are also examples of clique-whiskered graphs introduced by Cook and Nagel, which include Cohen-Macaulay chordal graphs, Cohen-Macaulay Cameron-Walker graphs, and clique corona graphs. Surprisingly, for Cohen-Macaulay chordal graphs, we can use a special ordering on the minimal generators to show that $\hs_k(J(G))$ has linear quotients for all $k$. Moreover, for all Cohen-Macaulay Cameron-Walker graphs and certain clique corona graphs, we show that $\hs_{k}(J(G))$ is weakly polymatroidal, and thus, has linear quotients for all $k$.
\end{abstract}

\maketitle

\section{Introduction}

The $\mathbb N^n$-graded minimal free resolution of a monomial ideal provides a powerful tool for understanding the intrinsic algebraic structure of the ideal. In particular, the syzygy modules arising in the resolution play a central role in understanding the fine properties of the ideal. The study of these syzygies and the structure of the associated graded free resolution remains an active and ongoing area of research in commutative algebra. Let $\K$ be a field, and $I$ a monomial ideal in the polynomial ring $R=\K[x_1,\ldots,x_n]$. Then the $\mathbb N^n$-graded minimal free resolution of $I$ is an exact sequence of the form 
	\[
	\mathcal F_{\cdot}: \,\, 0\rightarrow F_r\xrightarrow{\partial_{r}}\cdots\xrightarrow{\partial_{2}} F_1\xrightarrow{\partial_1} F_0\xrightarrow{\partial_0} I\rightarrow 0, 
	\]
	where $F_k=\oplus_{\mathbf a_{kj}\in\mathbb N^n}R(-\mathbf a_{kj})$ for each $k\ge 0$, and $R(-\mathbf a_{kj})$ is the polynomial ring $R$ with its grading twisted by the integer vector $\mathbf a_{kj}$. The vectors $\mathbf a_{kj}$ are called the $k^{\text{th}}$ multigraded shifts of $I$, and the number $r$ is called the projective dimension of $I$, denoted by $\mathrm{pd}(I)$.

    To gain a general understanding of the multigraded shifts in a graded minimal free resolution, a recent line of research emerged with the introduction of homological shift ideals. These ideals first appeared implicitly in the book by Miller and Sturmfels \cite[Theorem 2.18]{MillerStrumfelsBook} in 2005. They later surfaced explicitly as ideals generated by multigraded shifts in the works of Bayati et al. \cite{BayatiMatroidal, BayatietcLinQuoMultShiftBorel}. However, the concept drew significant attention following the influential work of Herzog et al. \cite{HMRZ2021} in 2020, where the term `homological shift ideal' was formally introduced.
    
     Let $k$ be a non-negative integer. The $k^{\text{th}}$ {\it homological shift ideal} of $I$, denoted by $\hs_k(I)$, is the monomial ideal generated by the monomials $\x^{\mathbf a_{kj}}$, where $\mathbf a_{kj}$ is a $k^{\text{th}}$ multigraded shift of $I$. Observe that $\hs_0(I)=I$ and $\hs_k(I)\neq\l 0\r$ if and only if $k\le \mathrm{pd}(I)$. The study of homological shift ideals has been carried out for various classes of monomial ideals, including polymatroidal ideals \cite{BandariPolyLinRes, BayatiMatroidal, BayatiQuasiAdditiveHomShift, FicarraHomShiftPoly}, Borel type ideals \cite{BayatietcLinQuoMultShiftBorel, HerzogEtcBorelType}, edge ideals of graphs \cite{KanoyTrungAryamanHomShiftCochordal, FicarraHerzogDiracMultiSyz, FicarraQureshiHomShiftAlg, FicarraQureshiEdgeAsymSyz, BayatietcHomLinQuoEdgeIdealGraph}, and the vertex cover ideals \cite{CrupiFicarraVeryWellCoveredJAlg, CrupiFicarraVeryWellCoveredMedd}, among others. It is important to note that certain multigraded shifts appearing in the minimal free resolution of $I$ may not be detected by the minimal generators of the homological shift ideals $\hs_k(I)$. However, this phenomenon does not arise if the ideal in consideration admits a linear resolution or satisfies the linear quotient property. Thus, monomial ideals with linear resolutions or the linear quotient property hold particular significance in the study of homological shift ideals.
	
	In this article, we focus on the homological shift ideals of the vertex cover ideal of a graph. Let $G$ be a finite simple graph on the vertex set $V_G=\{x_1,\ldots,x_n\}$ with edge set $E_G$. By identifying the vertices with the indeterminates, we work in the polynomial ring $R$. A subset $\C\subseteq V_G$ is called a {\it vertex cover} of $G$ if $\C\cap e\neq\emptyset$ for each $e\in E_G$. A {\it minimal vertex cover} is a vertex cover that is minimal with respect to inclusion. Then the ideal
	\[
	J(G)=\l \x_{\C}\mid\C\text{ is a minimal vertex cover of }G \r
	\]
	is called the {\it vertex cover ideal} of $G$, where for any subset $A\subseteq\{x_1,\ldots,x_n\}$, $\x_{A}$ denotes the monomial $\prod_{x_i\in A}x_i$ in $R$. Recall that the vertex cover ideal $J(G)$ is the Alexander dual of the edge ideal $I(G)$ of $G$. By the well-known Eagon-Reiner \cite{EagonReiner1998} theorem, $J(G)$ has linear resolution if and only if $I(G)$ is Cohen-Macaulay, and in that case we say that $G$ is a Cohen-Macaulay graph. In this paper, we restrict ourselves to the vertex cover ideal of a Cohen-Macaulay graph. 
	
	An important question in the study of homological shift ideals is to investigate which properties of the monomial ideal $I$ are preserved by all its homological shift ideals $\hs_{k}(I)$. For instance, is it true that if $I$ has a linear resolution (respectively, linear quotients), then so does $\hs_{k}(I)$, for all $k\ge 0$? In general, the answers to both questions are negative; see, for instance, \cite[Example 1.4]{FicarraHerzogDiracMultiSyz} and \cite[Remark 2.2]{BayatietcHomLinQuoEdgeIdealGraph}. Nevertheless, it remains an interesting problem to identify classes of monomial ideals for which the linear resolution or the linear quotient property extends to all of their homological shift ideals. As mentioned before, the vertex cover ideal of a Cohen-Macaulay graph has linear resolution. Note that, in general, all monomial ideals having linear quotient property also have linear resolution, but the converse is not true \cite{HHBook}. For many classes of Cohen-Macaulay graphs, namely, the Cohen-Macaulay very well-covered graphs \cite{KimuraPournakiFakhariTeraiYassemi}, Cohen-Macaulay chordal graphs \cite{HHBook}, Cohen-Macaulay Cameron-Walker graphs \cite{HibiHigashitaniKimuraCamWalk}, etc., their vertex cover ideals also have the linear quotient property.
	
	The homological shift ideals of the vertex cover ideals of graphs are studied by Crupi and Ficarra \cite{CrupiFicarraVeryWellCoveredJAlg, CrupiFicarraVeryWellCoveredMedd}. In particular, it was shown in \cite[Theorem 4.1]{CrupiFicarraVeryWellCoveredJAlg} that if $G$ is a Cohen-Macaulay very well covered graph, then $\hs_{k}(J(G))$ admits linear quotients for all $k\ge 0$. In fact, as shown in \cite[Theorem 4.2]{CrupiFicarraVeryWellCoveredJAlg}, a very well-covered is Cohen-Macaulay if and only if all homological shift ideals of the Alexander dual of its cover ideals have linear quotients. Furthermore, it was conjectured in \cite[Conjecture 4.4]{CrupiFicarraVeryWellCoveredJAlg}  that $\hs_{k}(J(G)^l)$ has linear quotients for all $k\ge 0$. The conjecture has been verified for the class of Cohen-Macaulay bipartite graphs \cite[Corollary 4.11]{CrupiFicarraVeryWellCoveredJAlg}, and whiskered graphs \cite[Theorem 4.8]{CrupiFicarraVeryWellCoveredMedd}. However, our first main result in this direction demonstrates that for each $k\ge 2$, there exists a Cohen-Macaulay very well-covered graph $G_k$ which is also a Cohen-Macaulay bipartite graph as well as a whiskered graph so that the $k^{th}$ homological shift ideal of its vertex cover ideal fails to satisfy the linear quotient property. In fact, we establish the following stronger result:
	    \begin{customthm}{\ref{theorem 200}}
	    	For each $k\ge 2$, there exists a Cohen-Macaulay bipartite graph $G_k$, which is also a whiskered graph such that the ideal $\hs_{k}(J(G_k))$ does not have linear resolution and hence, $\hs_{k}(J(G_k))$ does not satisfy the linear quotient property.
	    \end{customthm}

        \noindent
	    One can see that \Cref{theorem 200} implies \cite[Theorem 4.1, Theorem 4.2, Corollary 4.11]{CrupiFicarraVeryWellCoveredJAlg}, \cite[Theorem 4.8]{CrupiFicarraVeryWellCoveredMedd} as well as \cite[Conjecture 4.4]{CrupiFicarraVeryWellCoveredJAlg} are not true.\par 
        
        Incidentally, the graphs $G_k$ are examples of clique-whiskered graphs, introduced by Cook and Nagel in \cite{CookNagel2012}. The clique-whiskered graphs are combinatorially rich in the sense that the $f$-vector of the independence complex of any graph can also be expressed as the $h$-vector of the independence complex of some clique-whiskered graph (see \cite{CookNagel2012}, for many other interesting results in this direction). As the name suggests, clique-whiskered graphs are also a generalization of whiskered graphs, first introduced by Villarreal \cite{CM}. The minimal free resolution of the vertex cover ideal of a clique-whiskered graph has been explicitly described in a recent preprint by Muta and Terai \cite{MutaTeraiMinFreeResCliqWhis}. The clique-whiskered graphs are vertex decomposable, and in particular, Cohen-Macaulay. Moreover, they contain several well-known subclasses of Cohen-Macaulay graphs, including Cohen-Macaulay chordal graphs, Cohen-Macaulay Cameron-Walker graphs, clique corona graphs, etc. In light of \Cref{theorem 200}, a natural question arises: for which classes of clique-whiskered graphs do all homological shift ideals of their vertex cover ideals possess the linear quotient property? In this context, we first establish the following result:
	    \begin{customthm}{\ref{chordal theorem}}
	    	If $G$ is a Cohen-Macaulay chordal graph, then for each $k\ge 0$, $\hs_k(J(G))$ has linear quotients.
	    \end{customthm}
	    
	    \noindent
	    To prove \Cref{chordal theorem}, we employ the technique of Betti splitting of vertex cover ideals of chordal graphs and construct a carefully designed inductive ordering on the minimal generators of the corresponding homological shift ideals. Here we remark that this type of ordering on minimal generators might be useful in the future study of homological shift ideals. Moving on, our next main result concerns the homological shift ideals of Cohen-Macaulay Cameron-Walker graphs.
	    
	    \begin{customthm}{\ref{Cameron-Walker graph}}
	    	If $G$ is a Cohen-Macaulay Cameron-Walker graph, then for each $k\ge 0$, $\hs_k(J(G))$ is weakly polymatroidal.
	    \end{customthm}
	    The notion of weakly polymatroidal ideal generated in a fixed degree was introduced by Kokubo and Hibi \cite{KokubuHibiWeakPoly}, and later extended to arbitrary monomial ideals by Mohammadi and Moradi \cite{MohammadiMordaliWeakPoly}. In \cite[Theorem 1.3]{MohammadiMordaliWeakPoly} it was shown that all weakly polymatroidal ideals admit linear quotients. Consequently, all homological shift ideals of the vertex cover ideal of a Cohen-Macaulay Cameron-Walker graph possess the linear quotient property.
	    
	    The concept of clique corona graphs was introduced by Hoang and
	    Pham in \cite{HoangPhamCliqueCorona}, where they proved that the clique corona graphs are the only class of corona graphs that are Cohen-Macaulay. Notably, the graphs $G_k$ in \Cref{theorem 200} are examples of clique-corona graphs. Therefore, in general, it is not the case that all homological shift ideals of the vertex cover ideal of clique corona graphs satisfy the linear quotient property. However, in \Cref{clique corona} we have shown that for certain classes of clique corona graphs, the linear quotient property does hold. In fact, for these graphs, the corresponding homological shift ideals are weakly polymatroidal.

\section{Main results}

    This section presents all of our main results. We begin by examining the homological shift ideals of the vertex cover ideals of the graphs $G_k$. As noted earlier, the graphs $G_k$, along with Cohen-Macaulay chordal graphs, Cohen-Macaulay Cameron-Walker graphs, and clique corona graphs, are all examples of clique-whiskered graphs introduced by Cook and Nagel \cite{CookNagel2012}. Therefore, our first goal in this section is to analyze the homological shift ideals of the vertex cover ideals of clique-whiskered graphs. Before proceeding, let us fix some notations and terminologies that will be used throughout the paper.

    Let $G$ be a graph on the vertex set $V_G$ and the edge set $E_G$. For any $W\subseteq V_G$, $G\setminus W$ denotes the graph on the vertex set $V_G\setminus W$ with edge set $\{e\in E_G\colon e\cap W=\emptyset\}$. In this case, we call $G\setminus W$ an {\it induced subgraph} of $G$ on the vertex set $V(G)\setminus W$. If $W=\{a\}$ for some $a\in V_G$, then $G\setminus\{a\}$ is simply denoted by $G\setminus a$. For a vertex $a\in V_G$, the set $\{b\in V_G\mid \{a,b\}\in E_G\}$ is called the (open) {\it neighborhood} of $G$, and is denoted by $\N_G(a)$. The {\it closed neighborhood} of $a$ is the set $\N_G[a]:=\N_G(a)\cup\{a\}$. The {\it cycle} graph $C_n$ on the vertex set $\{x_1,\ldots,x_n\}$ consists of the edges $\{\{x_i,x_{i+1}\},\{x_1,x_n\}\colon i\in[n-1]\}$. Here for any positive integer $k$, the set $\{1,\ldots,k\}$ is denoted in short by $[k]$. A graph on $n$ vertices is said to be \textit{complete} if there is an edge between every pair of vertices and is denoted by $K_n$.

    Let us recall the construction of clique-whiskered graphs from \cite{CookNagel2012}. A subset $A$ of the vertex set of a graph $G$ is said to be a {\it clique} of $G$ if it induces a complete subgraph of $G$. A {\it clique vertex partition} of $G$ is a set $\pi = \{A_1 ,\dots, A_t \}$ of disjoint cliques of $G$ such that their union forms $V_G$. For each $i\in[t]$, let $A_i=\{w_{i1},\ldots,w_{ir_i}\}$. The {\it clique-whiskered graph} $\Gp$ for the clique vertex partition $\pi$ is  defined as follows:
    \begin{align*}
        V_{\Gp}&=V_G\sqcup\{v_1,\ldots,v_t\},\\
        E_{\Gp}&=E_G\sqcup(\cup_{i=1}^t\{\{v_i,w_{ij}\}\colon j\in [r_i]\}).
    \end{align*}
 The following two propositions are helpful to find the minimal generators of $\hs_k(J(\Gp))$.
	
	\begin{proposition}\label{proposition 4}
		Let $\C$ be a minimal vertex cover of $\Gp$. Then for each $i\in[t]$, we have $|\N_{\Gp}[v_i]\cap\C|=|\N_{\Gp}[v_i]|-1$.
	\end{proposition}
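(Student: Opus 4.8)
The plan is to exploit the fact that, by the very construction of $\Gp$, the closed neighborhood $\N_{\Gp}[v_i]=\{v_i,w_{i1},\ldots,w_{ir_i}\}$ is itself a clique of $\Gp$ of cardinality $r_i+1=|\N_{\Gp}[v_i]|$: indeed $A_i=\{w_{i1},\ldots,w_{ir_i}\}$ is a clique of $G$, hence of $\Gp$ since $E_G\subseteq E_{\Gp}$, while $v_i$ is joined to every $w_{ij}$ by the edges added when forming $\Gp$. Thus the assertion is equivalent to saying that a minimal vertex cover of $\Gp$ contains all but exactly one vertex of this clique, and I would prove the two inequalities $|\N_{\Gp}[v_i]\cap\C|\ge|\N_{\Gp}[v_i]|-1$ and $|\N_{\Gp}[v_i]\cap\C|\le|\N_{\Gp}[v_i]|-1$ separately.

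For the lower bound I would use the standard fact that $\C$ is a vertex cover of $\Gp$ if and only if its complement $V_{\Gp}\setminus\C$ is an independent set of $\Gp$. An independent set meets any clique in at most one vertex, so $|(V_{\Gp}\setminus\C)\cap\N_{\Gp}[v_i]|\le 1$, which rearranges to $|\N_{\Gp}[v_i]\cap\C|\ge|\N_{\Gp}[v_i]|-1$. Note this direction uses only that $\C$ is a vertex cover, not minimality.

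For the upper bound, i.e.\ that $\C$ cannot contain all of $\N_{\Gp}[v_i]$, I would argue by contradiction using minimality. If $\N_{\Gp}[v_i]\subseteq\C$, put $\C'=\C\setminus\{v_i\}$. Every edge of $\Gp$ incident to $v_i$ has the form $\{v_i,w_{ij}\}$ with $w_{ij}\in\C'$, and every edge not incident to $v_i$ still meets $\C'$ because it met $\C$ and does not contain $v_i$; hence $\C'$ is a strictly smaller vertex cover of $\Gp$, contradicting the minimality of $\C$. Combining the two inequalities yields $|\N_{\Gp}[v_i]\cap\C|=|\N_{\Gp}[v_i]|-1$.

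Since the argument is short, I do not anticipate a serious obstacle; the only points requiring care are verifying that $\N_{\Gp}[v_i]$ is genuinely a clique of $\Gp$ (so that the independent-set bound applies) and that deleting $v_i$ from $\C$ preserves the covering property — both immediate from the explicit descriptions of $V_{\Gp}$ and $E_{\Gp}$.
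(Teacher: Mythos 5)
Your argument is correct and is essentially the same as the paper's: the lower bound comes from the fact that $\N_{\Gp}[v_i]$ is a clique (the paper states this directly, you verify it and phrase it via the complement being independent), and the upper bound comes from the observation that if $\C$ contained all of $\N_{\Gp}[v_i]$, then $\C\setminus\{v_i\}$ would still be a vertex cover, contradicting minimality. No gaps; your write-up just spells out the routine verifications the paper leaves implicit.
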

	\begin{proof}
		Since $\N_{\Gp}[v_i]$ forms a complete graph, we have that $|\N_{\Gp}[v_i]\cap\C|\ge|\N_{\Gp}[v_i]|-1$. On the other hand, if $\N_{\Gp}[v_i]\cap\C=\N_{\Gp}[v_i]$, then $\C\s\{v_i\}\subsetneq\C$ is a vertex cover of $\Gp$, a contradiction.	
	\end{proof}
	
	\begin{proposition}\label{proposition 2}
		Let $\C$ be a minimal vertex cover of $\Gp\s w_{ij}$ for some $i\in[t]$ and $j\in [r_i]$. Then $\C\s \N_{\Gp}(w_{ij})$ is a vertex cover of $\Gp\s\N_{\Gp}[w_{ij}]$.
	\end{proposition}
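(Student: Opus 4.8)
The plan is to verify the vertex-cover condition edge by edge, working directly from the definitions of induced subgraph and vertex cover; minimality of $\C$ will in fact play no role. First I would record a well-definedness point: since $\C$ is a vertex cover of $\Gp\s w_{ij}$, it is a subset of $V_{\Gp}\s\{w_{ij}\}$, so $w_{ij}\notin\C$; together with the fact that $\C\s\N_{\Gp}(w_{ij})$ is by construction disjoint from $\N_{\Gp}(w_{ij})$, this gives $\C\s\N_{\Gp}(w_{ij})\subseteq V_{\Gp}\s\N_{\Gp}[w_{ij}]$, so it is a legitimate candidate for a vertex cover of $\Gp\s\N_{\Gp}[w_{ij}]$.

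Next I would fix an arbitrary edge $e=\{a,b\}$ of $\Gp\s\N_{\Gp}[w_{ij}]$. By the definition of an induced subgraph, $e\in E_{\Gp}$ and $e\cap\N_{\Gp}[w_{ij}]=\emptyset$; in particular $a\neq w_{ij}$ and $b\neq w_{ij}$, so $e$ is also an edge of $\Gp\s w_{ij}$. Since $\C$ is a vertex cover of $\Gp\s w_{ij}$, at least one endpoint of $e$, say $a$, lies in $\C$. The one observation that makes the argument close is that this endpoint automatically survives the deletion of $\N_{\Gp}(w_{ij})$ from $\C$: because $a\notin\N_{\Gp}[w_{ij}]=\N_{\Gp}(w_{ij})\cup\{w_{ij}\}$, we have $a\notin\N_{\Gp}(w_{ij})$, hence $a\in\C\s\N_{\Gp}(w_{ij})$ and $e$ is covered. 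As $e$ was arbitrary, $\C\s\N_{\Gp}(w_{ij})$ is a vertex cover of $\Gp\s\N_{\Gp}[w_{ij}]$.

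There is no genuine obstacle here; the statement is essentially a bookkeeping lemma. The only thing one must be careful about is keeping track of which vertices and which edges remain after the two successive deletions $\Gp\mapsto\Gp\s w_{ij}$ and $\Gp\mapsto\Gp\s\N_{\Gp}[w_{ij}]$, and noticing that an edge avoiding the \emph{closed} neighborhood $\N_{\Gp}[w_{ij}]$ is in particular an edge of $\Gp\s w_{ij}$ while any of its endpoints avoids the \emph{open} neighborhood $\N_{\Gp}(w_{ij})$ — which is exactly what is removed from $\C$.
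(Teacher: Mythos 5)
Your proof is correct and is just the edge-by-edge unpacking of the paper's one-line argument: since $\Gp\s\N_{\Gp}[w_{ij}]$ is an induced subgraph of $\Gp\s w_{ij}$, every edge of it is covered by $\C$, and the covering vertex cannot lie in $\N_{\Gp}(w_{ij})$, so it survives in $\C\s\N_{\Gp}(w_{ij})$. Same approach, merely written in more detail (and, as you note, minimality of $\C$ is never used).
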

    \begin{proof}
        Since $\Gp\s\N_{\Gp}[w_{ij}]$ is an induced subgraph of $\Gp\s w_{ij}$, the result follows.
    \end{proof}

    \begin{definition}[linear quotient]
        Let $I$ be a monomial ideal with minimal generating set $\G(I)$. We say that $I$ has linear quotients if there exists a total order on $\G(I)$, say $m_{1}>m_{2}>\dots>m_{r}$ such that the ideals
        \[
\l m_{1},\ldots,m_{j}\r:m_{j+1}
        \]
        are generated by variables for each $j\in[r-1]$.
    \end{definition}

To find the minimal generators of $\hs_k(J(\Gp))$, we define a total order on $V_{\Gp}$ as follows:
\begin{align}\label{eq 200}
    w_{11}>w_{12}>\dots>w_{1r_1}>v_1>w_{21}>\dots>v_{t-1}>w_{t1}>w_{t2}>\dots>w_{tr_t}>v_t.
\end{align}
The fact that $\Gp$ is a Cohen-Macaulay graph implies the vertex cover ideal $J(\Gp)$ is an equigenerated ideal. In general, if $I$ is any equigenerated monomial ideal having linear quotients with respect to a total order on the minimal generators, say  $m_1>m_2>\cdots>m_r$, then one can define, for each $i\in\{2,\ldots,r\}$, $\set_{I}(m_i)=\{x_{i_1},\ldots,x_{i_s}\}$, where $\l m_1,\ldots,m_{i-1}\r:m_i=\l x_{i_1},\ldots,x_{i_s}\r$. In the special case, when $H$ is a graph with the vertex cover ideal having linear quotients, and $\C$ is a minimal vertex cover of $H$ such that $m=\x_{\C}$ is a minimal generator of $J(H)$, we write $\set_{J(H)}(m)$ simply as $\set_{H}(\C)$.

In the next theorem, we show that $J(\Gp)$ has linear quotients with respect to the lexicographic order induced from the total order of $V_{\Gp}$ in \Cref{eq 200}. Moreover, we explicitly determine $\set_{\Gp}(\C)$, where $\C$ is a minimal vertex cover of $\Gp$. Here we remark that the fact $J(\Gp)$ has linear quotients is well-known (cf. \cite{CookNagel2012}). However, the combinatorial description of $\set_{\Gp}(\C)$ in \Cref{theorem 1} is new, and will play a crucial role in the subsequent part of this section.

	\begin{theorem}\label{theorem 1}
		$J(\Gp)$ has linear quotients with respect to the lexicographic order induced from the total order $>$ on ${V_{\Gp}}$ in \Cref{eq 200}. Moreover, for each $\x_{\C}\in\G(J(\Gp))$ we have $\mathrm{set}_{\Gp}(\C)=\cup_{i=1}^t( \N_{\Gp}(v_i)\setminus \C)$.
	\end{theorem}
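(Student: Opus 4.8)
The plan is to prove both assertions simultaneously by carefully analyzing the structure of minimal vertex covers of $\Gp$. First I would record, using \Cref{proposition 4}, the basic dichotomy for a minimal vertex cover $\C$ of $\Gp$: for each clique $A_i$, either $v_i \notin \C$ and $A_i \subseteq \C$, or $v_i \in \C$ and exactly one vertex $w_{ij_i} \in A_i$ is missing from $\C$ (it cannot be that two or more vertices of $A_i$ are missing, since $A_i \cup \{v_i\}$ is a clique, and it cannot be that all of $\N_{\Gp}[v_i]$ lies in $\C$). So a minimal vertex cover is essentially the data of a choice, for each $i$, of ``include $A_i$ and drop $v_i$'' or ``pick one $w_{ij_i}$ to drop and include $v_i$'', subject to the constraint that the resulting set actually covers all edges of $G$ — equivalently, that $\{w_{ij_i} : v_i \in \C\}$ is an independent set whose complement-type conditions are satisfied within $G$. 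I would make this bookkeeping precise, since it controls the whole argument.

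Next I would verify the linear quotient property with respect to the lex order $>_{\lex}$ induced from \Cref{eq 200}. Fix $\x_{\C} \in \G(J(\Gp))$ which is not the $>_{\lex}$-largest generator; I must show that for every $\x_{\D}$ with $\x_{\D} >_{\lex} \x_{\C}$, there is a variable $x \in \l \x_{\D}\r : \x_{\C}$, i.e. $x \in \C \setminus \D$, which already appears in $\l \x_{\E} \r : \x_{\C}$ for some single generator $\x_{\E}$ with $\x_{\E} >_{\lex} \x_{\C}$ and, moreover, this variable is in the claimed set $\bigcup_i (\N_{\Gp}(v_i) \setminus \C)$. The candidate is visible from the structure above: I would show the colon ideal $\l \x_{\D} : \x_{\D} >_{\lex} \x_{\C}\r : \x_{\C}$ equals $\l \bigcup_{i=1}^{t} (\N_{\Gp}(v_i) \setminus \C) \r$. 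One inclusion is the main point: given $w_{ij} \in \N_{\Gp}(v_i) \setminus \C$ (so by the dichotomy $v_i \in \C$ and $w_{ij}$ is the dropped vertex of $A_i$, or $w_{ij}$ is dropped while still $v_i\in\C$), I produce a minimal vertex cover $\D$ obtained from $\C$ by a local swap at the clique $A_i$ — roughly, replace the pair (pattern at $A_i$) so that $w_{ij}$ enters and $v_i$ leaves (or another $w_{ij'}$ with $w_{ij'} <w_{ij}$ leaves) — arranged so that $\D$ is still a vertex cover and $\x_{\D} >_{\lex} \x_{\C}$ because the symmetric difference is resolved in favor of the lex-larger vertex, and $\C \setminus \D = \{w_{ij}\}$, giving $\l \x_{\D}\r : \x_{\C} = \l w_{ij} \r$. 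For the reverse inclusion, if $\x_{\D} >_{\lex} \x_{\C}$ and $x$ is the $>$-largest vertex in the symmetric difference, then $x \in \D \setminus \C$; I must argue $x \in \bigcup_i \N_{\Gp}(v_i)$, i.e. $x$ is some $w_{ij}$ and $v_i$ is ``responsible'', which follows by tracing the dichotomy: the largest vertex where $\C$ and $\D$ differ, when it is in $\D$ but not $\C$, must be a clique vertex $w_{ij}$ that $\C$ drops, and then $w_{ij} \in \N_{\Gp}(v_i) \setminus \C$. Assembling these two inclusions yields that $J(\Gp)$ has linear quotients and that $\set_{\Gp}(\C) = \bigcup_{i=1}^{t} (\N_{\Gp}(v_i) \setminus \C)$ at once.

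The step I expect to be the main obstacle is the construction of the witnessing cover $\D$ in the hard inclusion: given $w_{ij} \in \N_{\Gp}(v_i)\setminus \C$, I need a \emph{single} minimal vertex cover $\D$ of $\Gp$ with $\x_{\D} >_{\lex} \x_{\C}$ and $\C \setminus \D = \{w_{ij}\}$ (not merely $w_{ij} \in \C \setminus \D$). The subtlety is that naively adding $w_{ij}$ and deleting $v_i$ from $\C$ need not give a vertex cover, because some edge of $G$ incident to $v_i$'s ``partner'' structure might become uncovered — one has to also trade inside $A_i$ (bring in $w_{ij}$, possibly push out a \emph{smaller} $w_{ij'} \in \C$ and bring $v_i$ back) while checking that all of $G$'s edges stay covered. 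The ordering in \Cref{eq 200}, which places each $v_i$ immediately after all of $A_i$, is precisely engineered so that such a swap strictly increases the lex order and localizes the difference to $\{w_{ij}\}$; I would handle the edge-covering verification by the dichotomy, noting that within the clique $A_i \cup \{v_i\}$ at most one vertex is ever absent, and outside $A_i$ the cover $\D$ agrees with $\C$, so no edge of $G \setminus A_i$ is affected and the edges meeting $A_i$ are covered because $A_i \setminus \{w_{ij}\} \subseteq \D$ still covers $A_i$'s internal edges and $N_G(w_{ij}) \subseteq \D$ can be ensured by the choice. Finally I would double-check the base case (the $>_{\lex}$-largest generator, where one expects the clique-partition pattern to be ``$A_1$ in, $v_1$ out; then greedily maximal'') and confirm consistency, e.g. by a small example such as $G$ a single edge or a path, to make sure $\bigcup_i(\N_{\Gp}(v_i)\setminus\C)$ matches the computed $\set$.
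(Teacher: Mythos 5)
Your proposal is correct and follows essentially the paper's own route: the structural dichotomy from \Cref{proposition 4}, the lex order induced by \Cref{eq 200}, the observation that the largest vertex of the symmetric difference of two covers must be some $w_{ij}\in\N_{\Gp}(v_i)\setminus\C$ (never a $v_i$), and the local swap $(\C\setminus\{v_i\})\cup\{w_{ij}\}$ as the witnessing generator, which yields both the linear quotient property and $\set_{\Gp}(\C)=\bigcup_{i=1}^t(\N_{\Gp}(v_i)\setminus\C)$, just as the paper does (it merely separates the two assertions instead of computing the colon ideal in one stroke). The only blemishes are notational/minor: where you write $x\in\C\setminus\D$ and ``$\C\setminus\D=\{w_{ij}\}$'' you mean $\D\setminus\C$, since $\l\x_{\D}\r:\x_{\C}=\l\x_{\D\setminus\C}\r$ (your swap indeed gives $\D\setminus\C=\{w_{ij}\}$ and $\C\setminus\D=\{v_i\}$), the parenthetical alternative of expelling a smaller $w_{ij'}$ is unnecessary (the dichotomy forces $v_i\in\C$ and $A_i\setminus\C=\{w_{ij}\}$), and the obstacle you anticipate is milder than feared because $v_i$ is adjacent only to $A_i$, so deleting $v_i$ after inserting $w_{ij}$ cannot uncover any edge.
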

	\begin{proof}
		Let $\C$ and $\C'$ be two minimal vertex cover of $G$ such that $\x_{\C'}>\x_{\C}$ and let 
		\[
		h=\frac{\mathrm{lcm}(\x_{\C'},\x_{\C})}{\x_{\C}}.
		\]
		If $\deg(h)=1$, then there is nothing to prove. Therefore, we may assume that $\deg h\ge 2$. Let $z=\max\{a\in V(\Gp)\mid a\in\C'\setminus \C\}$. Since $\x_{\C'}>\x_{\C}$, we observe that for each $b>z$ we have $b\in\C'$ if and only if $b\in\C$. Now suppose $z\in \N_{\Gp}[v_i]$ for some $i\in [t]$. If $z=v_i$, then by \Cref{proposition 4}, $w_{ij}\notin\C'$ for some $j\in[r_i]$. On the other hand, $z=v_i\notin\C$ implies $w_{ij}\in\C$, which is a contradiction to the fact that $\x_{\C'}>\x_{\C}$. Thus we must have $z\in \N_{\Gp}(v_i)$. Now suppose $z=w_{il}$ for some $l\in[r_i]$. Then $w_{il}\mid h$. Moreover, if $\C''=(\C\setminus\{v_i\})\cup\{w_{il}\}$, then using \Cref{proposition 4} one can see that $\C''$ is a minimal vertex cover of $\Gp$ so that $\x_{\C''}>\x_{\C}$, where
		\[
		w_{il}=\frac{\mathrm{lcm}(\x_{\C''},\x_{\C})}{\x_{\C}}.
		\]
		Thus $J(\Gp)$ has linear quotients with respect to the given monomial order.
		
		For the second part, let $w_{ij}\in \N_{\Gp}(v_i)\s\C$ for some $i\in [t]$ and $j\in[r_i]$. Then it is easy to see that $\x_{\C_1}>\x_{\C}$, where $\C_1=(\C\setminus\{v_i\})\cup\{w_{ij}\}$ is a minimal vertex cover of $G$. Moreover, 
		\[
		w_{ij}=\frac{\mathrm{lcm}(\x_{\C_1},\x_{\C})}{\x_{\C}}.
		\]
		Thus we have $\cup_{i=1}^t( \N_{\Gp}(v_i)\setminus \C)\subseteq\mathrm{set}_{\Gp}(\C)$. To prove the other inclusion, it is enough to show that $v_i\notin\mathrm{set}_{\Gp}(\C)$ for each $i\in[t]$ since $\C\cap\mathrm{set}_{\Gp}(\C)=\emptyset$. Let $\C'$ be a minimal vertex cover of $\Gp$ such that $\x_{\C'}>\x_{\C}$ and $v_i\in \C'\setminus \C$. As before, let 
		\[
		h=\frac{\mathrm{lcm}(\x_{\C'},\x_{\C})}{\x_{\C}}.
		\]
		Then $v_i\mid h$. Let $v_l=\max\{v_i\mid v_i\in\C'\setminus \C\}$. Then for each $w_{lj}\in \N_{\Gp}(v_l)$, we have $w_{lj}\in\C$, whereas there exists some $w_{lm}\in \N_{\Gp}(v_l)$ such that $w_{lm}\notin\C'$. Therefore, since $\x_{\C'}>\x_{\C}$, we have some $v_p>v_l$ and $w_{pn}\in \N_{\Gp}(v_p)$ such that $w_{pn}\in\C'\s\C$. Thus $w_{pn}\mid h$ and hence $v_i\notin\mathrm{set}_{\Gp}(\C)$, as desired.
	\end{proof}

    From \Cref{theorem 1} one can easily verify the following.
	\begin{corollary}\label{proposition 3}
		Let $\C$ be a minimal vertex cover of $\Gp$ and $\sigma\subseteq V_{G}$ such that $\sigma\cap\C=\emptyset$. Then $\sigma\subseteq\mathrm{set}_{\Gp}(\C)$.
	\end{corollary}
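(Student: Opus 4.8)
The plan is to read off $\set_{\Gp}(\C)$ from \Cref{theorem 1} and then use the defining property of a clique vertex partition. By \Cref{theorem 1} we have $\set_{\Gp}(\C)=\cup_{i=1}^t(\N_{\Gp}(v_i)\setminus\C)$. Since the union of the sets in a clique vertex partition $\pi=\{A_1,\dots,A_t\}$ is exactly $V_G$, and $\N_{\Gp}(v_i)=A_i$ for each $i\in[t]$ by the construction of $\Gp$, I would first observe that
\[
\set_{\Gp}(\C)=\bigcup_{i=1}^t\bigl(\N_{\Gp}(v_i)\setminus\C\bigr)=\Bigl(\bigcup_{i=1}^t\N_{\Gp}(v_i)\Bigr)\setminus\C=V_G\setminus\C.
\]

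Given this identification, the claim is immediate: if $\sigma\subseteq V_G$ satisfies $\sigma\cap\C=\emptyset$, then $\sigma\subseteq V_G\setminus\C=\set_{\Gp}(\C)$. There is essentially no obstacle here; the only point requiring a moment's care is confirming that $\N_{\Gp}(v_i)$ equals $A_i$ (i.e. that the whisker vertex $v_i$ is joined precisely to the vertices of the clique $A_i$ and to nothing else), which is exactly how $E_{\Gp}$ is defined, so the corollary follows directly from \Cref{theorem 1}.
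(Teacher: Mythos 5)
Your proposal is correct and follows exactly the route the paper intends: the paper derives this corollary directly from \Cref{theorem 1}, using (implicitly) the same observation that $\N_{\Gp}(v_i)=A_i$ and that the cliques of $\pi$ cover $V_G$, so $\set_{\Gp}(\C)=V_G\setminus\C$. Nothing is missing.
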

	
	\begin{remark}\label{remark1}
		It follows from \cite[Page 4]{HMRZ2021} that for each $k\ge 0$, the minimal monomial generators of $\hs_k(J(\Gp))$ are of the form $\x_{\C}\x_{\sigma}$, where $\C$ is a minimal vertex cover of $\Gp$, and $\sigma\subseteq\set_{\Gp}(\C)$ with $|\sigma|=k$. Thus by \Cref{theorem 1} and \Cref{proposition 3}, we can say $\sigma\subseteq V_G$ with $\C\cap\sigma=\emptyset$ and $|\sigma|=k$. 
	\end{remark}

    \noindent Building on the above observations, we now proceed to construct the graphs $G_k$ as follows. For each integer $k\ge 2$, let $G_k$ denote the graph described as
	\begin{align*}
		V_{G_k}&=\{x_i,y_i\colon i\in[2k]\},\\
		E_{G_k}&=\{\{x_i,y_i\},\{x_i,x_{i+1}\}\colon i\in[2k]\}.
	\end{align*}
   \noindent Here we assume that $x_{2k+1}=x_{1}$. For a pictorial view of the graph $G_k$, see \Cref{Gk graph}.

        \begin{figure}[h!]\
		\centering
		\begin{tikzpicture}
			[scale=.55]
			\draw [fill] (4,8) circle [radius=0.1];
			\draw [fill] (4,6) circle [radius=0.1];
			\draw [fill] (6,8) circle [radius=0.1];
			\draw [fill] (6,6) circle [radius=0.1];
			\draw [fill] (0,4) circle [radius=0.1];
			\draw [fill] (2,4) circle [radius=0.1];
			\draw [fill] (8,4) circle [radius=0.1];
			\draw [fill] (10,4) circle [radius=0.1];
			\draw [fill] (4,2) circle [radius=0.1];
			\draw [fill] (6,2) circle [radius=0.1];
			\draw [fill] (4,0) circle [radius=0.1];
			\draw [fill] (6,0) circle [radius=0.1];
			\node at (4,8.7){$y_1$};
			\node at (3.2,6){$x_1$};
			\node at (6.8,6){$x_2$};
			\node at (6,8.7){$y_2$};
			\node at (-0.5,4.7){$y_{2k}$};
			\node at (1.8,4.7){$x_{2k}$};
			\node at (8.2,4.6){$x_3$};
			\node at (10.2,4.6){$y_3$};
			\node at (2.7,2){$x_{2k-1}$};
			\node at (5.05,2){$\cdots$};
			\node at (6.8,2){$x_4$};
			\node at (2.7,0){$y_{2k-1}$};
			\node at (6.8,0){$y_4$};
			\draw (4,0)--(4,2)--(2,4)--(4,6)--(6,6)--(8,4)--(6,2)--(6,0);
			\draw (2,4)--(0,4);
			\draw (4,6)--(4,8);
			\draw (6,6)--(6,8);
			\draw (8,4)--(10,4);
			\draw (6,2)--(6,0);
		\end{tikzpicture}
		\caption{The graph $G_k=W(C_{2k})$.}\label{Gk graph}
	\end{figure}
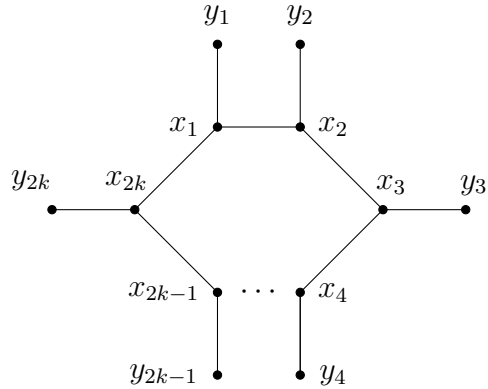

	It is easy to see that $G_k$ is an example of a clique-whiskered graph. In fact, $G_k$ is the whiskered graph on the cycle of length $2k$. Indeed, if $C_{2k}$ denotes the cycle of length $2k$ on the vertex set $\{x_1,\ldots,x_{2k}\}$ with clique vertex-partition $\pi=\{\{x_1\},\ldots,\{x_{2k}\}\}$, then $G_{k}=C_{2k}^{\pi}$. Using the description of the minimal generators of $\hs_{k}(J(\Gp))$ from \Cref{theorem 1} and \Cref{remark1}, we prove the following.
	
	\begin{theorem}\label{theorem 200}
		For each $k\ge 2$, the ideal $\hs_{k}(J(G_k))$ does not have linear resolution and hence $\hs_{k}(J(G_k))$ does not have linear quotients.
	\end{theorem}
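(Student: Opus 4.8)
The plan is to determine $\hs_k(J(G_k))$ completely by showing it has exactly two minimal generators, from which the failure of linearity is immediate. First I would describe the minimal vertex covers of $G_k$. With the clique vertex partition $\pi=\{\{x_1\},\dots,\{x_{2k}\}\}$ one has $G_k=C_{2k}^{\pi}$, so $w_{i1}=x_i$, $v_i=y_i$, and $\N_{G_k}(y_i)=\{x_i\}$. By \Cref{proposition 4}, a minimal vertex cover $\C$ of $G_k$ contains exactly one of $x_i,y_i$ for each $i\in[2k]$; setting $S=\{i\colon x_i\in\C\}$, the condition that $\C$ covers the cycle edges $\{x_i,x_{i+1}\}$ is precisely that $S$ is a vertex cover of $C_{2k}$, and conversely every vertex cover $S$ of $C_{2k}$ gives a minimal vertex cover $\C_S=\{x_i\colon i\in S\}\cup\{y_i\colon i\notin S\}$. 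By \Cref{theorem 1}, $\set_{G_k}(\C_S)=\bigcup_i(\N_{G_k}(y_i)\s\C_S)=\{x_i\colon i\notin S\}$, which has $2k-|S|$ elements.

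Next I would invoke \Cref{remark1}: every minimal generator of $\hs_k(J(G_k))$ has the form $\x_{\C_S}\x_{\sigma}$ with $\sigma\subseteq\set_{G_k}(\C_S)$ and $|\sigma|=k$, which requires $2k-|S|\ge k$, i.e.\ $|S|\le k$. Since a vertex cover of $C_{2k}$ has at least $k$ vertices, this forces $|S|=k$, so $S$ is a minimum vertex cover of $C_{2k}$; its complement is an independent set of size $k$, and a gap count around the cycle (the $k$ gaps between consecutive chosen vertices each have size $\ge 2$ and sum to $2k$, hence all equal $2$) shows $S$ is one of the two alternating classes $O=\{1,3,\dots,2k-1\}$ or $E=\{2,4,\dots,2k\}$. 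In each case $|\set_{G_k}(\C_S)|=k$, so $\sigma$ must be this whole set, and therefore $\hs_k(J(G_k))$ is generated by exactly the two monomials
\[
f=\x_{\C_O}\,\x_{\{x_2,x_4,\dots,x_{2k}\}}=x_1x_2\cdots x_{2k}\cdot y_2y_4\cdots y_{2k},\qquad
g=\x_{\C_E}\,\x_{\{x_1,x_3,\dots,x_{2k-1}\}}=x_1x_2\cdots x_{2k}\cdot y_1y_3\cdots y_{2k-1},
\]
both of degree $3k$; as neither divides the other, $\{f,g\}$ is the minimal generating set of $\hs_k(J(G_k))$.

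Finally, the minimal free resolution of a two-generated monomial ideal reads $0\to R(-\deg\lcm(f,g))\to R(-3k)^2\to\hs_k(J(G_k))\to0$, so $\beta_{1,\deg\lcm(f,g)}\neq0$. Here $\lcm(f,g)=x_1\cdots x_{2k}\cdot y_1y_2\cdots y_{2k}$ has degree $4k$, and $4k\ne3k+1$ whenever $k\ge2$; equivalently $\reg\big(\hs_k(J(G_k))\big)=4k-1>3k$. Hence $\hs_k(J(G_k))$ has no linear resolution, and since a monomial ideal with linear quotients generated in a single degree has a linear resolution, it has no linear quotients either. The only substantive step is the counting in the middle paragraph — pinning down that precisely the two alternating minimal vertex covers of $G_k$ contribute and that $\sigma$ is then forced to be the full available set; after that the homological conclusion is formal.
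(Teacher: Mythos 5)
Your proposal is correct and takes essentially the same route as the paper: both use \Cref{remark1} and \Cref{theorem 1} to pin down $\hs_k(J(G_k))=x_1\cdots x_{2k}\cdot\langle y_1y_3\cdots y_{2k-1},\, y_2y_4\cdots y_{2k}\rangle$ (the paper rules out two consecutive $x_i$'s in $\sigma$, you count $|S|\le k$ on the cover side --- the same observation), and then compare the generating degree $3k$ with regularity $4k-1$. The only cosmetic difference is that you obtain $\mathrm{reg}=4k-1$ from the explicit length-one resolution of a two-generated monomial ideal, whereas the paper cites a lemma of Woodroofe.
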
 
	\begin{proof}
		By \Cref{remark1}, $\hs_{k}(J(G_k))$ is generated by the monomials $\x_{\C}\x_{\sigma}$, where $\C$ is a minimal vertex cover of $G_k$ and $\sigma\subseteq\set_{G_k}(\C)$ with $|\sigma|=k$. Note that, if $\{x_i,x_{i+1}\}\subseteq\sigma$, then $\C$ is not a vertex cover of $G_k$ since $\sigma\cap\C=\emptyset$. Thus by \Cref{theorem 1}, $\sigma=\{x_1,x_3,\ldots,x_{2k-1}\}$ or $\{x_2,x_4,\ldots,x_{2k}\}$. Consequently, $\C=\{y_1,x_2,y_3,x_4,\ldots,y_{2k-1},x_{2k}\}$ or $\C=\{x_1,y_2,x_3,y_4,\ldots,x_{2k-1},y_{2k}\}$, respectively. Hence,
		\[
		\hs_{k}(J(G_k))=\prod_{i=1}^{2k}x_i\cdot\left\l y_1y_3\cdots y_{2k-1},y_2y_4\cdots y_{2k}\right\r.
		\]
		Observe that $\hs_{k}(J(G_k))$ is generated in degree $3k$. However, by \cite[Lemma 8]{Woodroofe2014Jca}, we have $\reg(\hs_{k}(J(G_k)))=4k-1$, where $\reg$ stands for the Castelnuovo-Mumford regularity. It is well-known that if $I$ is any equigenerated ideal in degree $d$, then $I$ has a linear resolution if and only if $\reg(I)=d$ \cite{HHBook}. Thus, $\hs_{k}(J(G_k))$ does not have linear resolution for each $k\ge 2$. The last statement follows from \cite[Proposition 8.2.1]{HHBook}.
	\end{proof}

 Our next objective is to show that all homological shift ideals of Cohen-Macaulay chordal graphs have linear quotients. To establish the linear quotient property, we use the idea of Betti splitting. The notion of Betti splitting was formally introduced by Francisco-H\`a-Van Tuyl in \cite{FranciscoHaVanTuyl2009} and has since proven to be a powerful tool with numerous applications (see \cite{FranciscoHaVanTuyl2009} for the definition and basic properties). Now, for a clique-whiskered graph $\Gp$ it follows from \cite[Theorem 3.3]{CookNagel2012} and \cite[Theorem 2.3, Theorem 2.8]{MKA} that for any $w_{ij}\in V_{\Gp}$, the $w_{ij}$-partition 
\[
J(\Gp)=\x_{w_{ij}}J(\Gp\s w_{ij})+\x_{\N_{\Gp}(w_{ij})}J(\Gp\s\N_{\Gp}[w_{ij}]),
\]
is a Betti splitting. Moreover, it follows that the intersection of the ideals
\[
\x_{w_{ij}}J(\Gp\s w_{ij})\cap\x_{\N_{\Gp}(w_{ij})}J(\Gp\s\N_{\Gp}[w_{ij}])=\x_{\N_{\Gp}[w_{ij}]}J(\Gp\s\N_{\Gp}[w_{ij}]).
\]
In this situation, we have the following result due to \cite[Proposition 1.7]{CrupiFicarraVeryWellCoveredJAlg}, which is also a direct consequence of \cite[Proposition 2.1]{FranciscoHaVanTuyl2009}.
 \begin{proposition}\label{proposition 1}
		For each $w_{ij}\in V(\Gp)$, and $k\ge 1$, we have
		\begin{align*}
		\hs_k(J(\Gp))=\x_{\N_{\Gp}[w_{ij}]}\hs_{k-1}(J(\Gp\s\N_{\Gp}[w_{ij}]))+&\x_{w_{ij}}\hs_k(J(\Gp\s w_{ij}))\\
		&+\x_{\N_{\Gp}(w_{ij})}\hs_k(J(\Gp\s\N_{\Gp}[w_{ij}])).
		\end{align*}
	\end{proposition}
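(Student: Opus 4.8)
The plan is to deduce the formula from the general principle that a Betti splitting of a monomial ideal induces a corresponding decomposition of all of its homological shift ideals, combined with the elementary observation that taking homological shift ideals commutes with multiplication by a monomial. Concretely, I would proceed in three steps.

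First I would recall the Betti splitting machinery. If $I=I_1+I_2$ is a Betti splitting of monomial ideals, then by \cite[Proposition 2.1]{FranciscoHaVanTuyl2009} the multigraded Betti numbers satisfy $\beta_{k,\mathbf a}(I)=\beta_{k,\mathbf a}(I_1)+\beta_{k,\mathbf a}(I_2)+\beta_{k-1,\mathbf a}(I_1\cap I_2)$ for every $k\ge 1$ and every $\mathbf a\in\mathbb N^n$. Passing to the ideals generated by those monomials $\x^{\mathbf a}$ for which the relevant Betti number is nonzero, this translates into the identity $\hs_k(I)=\hs_k(I_1)+\hs_k(I_2)+\hs_{k-1}(I_1\cap I_2)$, which is exactly the statement of \cite[Proposition 1.7]{CrupiFicarraVeryWellCoveredJAlg}; I would simply cite it. Note that the hypothesis $k\ge 1$ is precisely what is needed for the term $\hs_{k-1}(I_1\cap I_2)$ to be meaningful.

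Second, I would record the routine fact that for any monomial ideal $L\subseteq R$ and any monomial $\x_\sigma$ with exponent vector $\mathbf b$, multiplication by $\x_\sigma$ is an isomorphism of $R$-modules $L\xrightarrow{\sim}\x_\sigma L$ that shifts the $\mathbb N^n$-grading by $\mathbf b$; hence $\beta_{k,\mathbf a}(\x_\sigma L)=\beta_{k,\mathbf a-\mathbf b}(L)$ for all $k$ and $\mathbf a$, and therefore $\hs_k(\x_\sigma L)=\x_\sigma\,\hs_k(L)$ for every $k\ge 0$.

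Third, I would apply the first step to the $w_{ij}$-partition
\[
J(\Gp)=\x_{w_{ij}}J(\Gp\s w_{ij})+\x_{\N_{\Gp}(w_{ij})}J(\Gp\s\N_{\Gp}[w_{ij}]),
\]
which, as recalled just before the statement (using \cite[Theorem 3.3]{CookNagel2012} together with \cite[Theorem 2.3, Theorem 2.8]{MKA}), is a Betti splitting with intersection $\x_{w_{ij}}J(\Gp\s w_{ij})\cap\x_{\N_{\Gp}(w_{ij})}J(\Gp\s\N_{\Gp}[w_{ij}])=\x_{\N_{\Gp}[w_{ij}]}J(\Gp\s\N_{\Gp}[w_{ij}])$. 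Taking $I_1=\x_{w_{ij}}J(\Gp\s w_{ij})$, $I_2=\x_{\N_{\Gp}(w_{ij})}J(\Gp\s\N_{\Gp}[w_{ij}])$ and $I_1\cap I_2=\x_{\N_{\Gp}[w_{ij}]}J(\Gp\s\N_{\Gp}[w_{ij}])$, the identity $\hs_k(J(\Gp))=\hs_k(I_1)+\hs_k(I_2)+\hs_{k-1}(I_1\cap I_2)$ becomes, after pulling the monomial factors $\x_{w_{ij}}$, $\x_{\N_{\Gp}(w_{ij})}$, and $\x_{\N_{\Gp}[w_{ij}]}$ outside via the second step, precisely the claimed formula. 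There is essentially no obstacle: both the Betti splitting property of the $w_{ij}$-partition and the homological-shift decomposition for a Betti splitting are available as cited results, so the only thing to check carefully is the multigraded bookkeeping that identifies $\hs_k(\x_\sigma L)$ with $\x_\sigma\hs_k(L)$, and that the indices ($k$ on the two "pure" pieces, $k-1$ on the intersection) are matched correctly.
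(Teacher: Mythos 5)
Your proposal is correct and follows essentially the same route as the paper, which presents this proposition as a direct consequence of \cite[Proposition 1.7]{CrupiFicarraVeryWellCoveredJAlg} (equivalently \cite[Proposition 2.1]{FranciscoHaVanTuyl2009}) applied to the $w_{ij}$-partition Betti splitting of $J(\Gp)$ recalled just before the statement. You merely make explicit the two routine ingredients the citation hides, namely passing from the additivity of multigraded Betti numbers to the identity on homological shift ideals and the fact that $\hs_k(\x_\sigma L)=\x_\sigma\hs_k(L)$, both of which are handled correctly.
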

    \begin{remark}\label{remark 400}
    One natural way to give an ordering on $\G(\hs_k(J(\Gp)))$ to examine the linear quotient property is by using the ordering on the three ideals mentioned in the expression of \Cref{proposition 1}. However, an immediate observation is that 
    \begin{enumerate}
        \item[$\bullet$] $\G\left(\x_{\N_{\Gp}(w_{ij})}\hs_k(J(\Gp\s\N_{\Gp}[w_{ij}]))\right)\cap \G\left(\x_{\N_{\Gp}[w_{ij}]}\hs_{k-1}(J(\Gp\s\N_{\Gp}[w_{ij}]))\right)=\emptyset$,
        
        \item[$\bullet$] $\G\left(\x_{\N_{\Gp}(w_{ij})}\hs_k(J(\Gp\s\N_{\Gp}[w_{ij}]))\right)\cap\G\left(\x_{w_{ij}}\hs_k(J(\Gp\s w_{ij}))\right)=\emptyset$,

        \item[$\bullet$] $\G\left(\x_{\N_{\Gp}[w_{ij}]}\hs_{k-1}(J(\Gp\s\N_{\Gp}[w_{ij}]))\right)\cap \G\left(\x_{w_{ij}}\hs_k(J(\Gp\s w_{ij}))\right)$ {\small may be non-empty}.
    \end{enumerate}
    Hence, in light of the third observation, particular attention must be paid to the ordering of the minimal generators of $\hs_k(J(\Gp))$. In the proof for the Cohen–Macaulay chordal case (\Cref{chordal theorem}), we provide a specific ordering that explicitly helps to address a key part of the proof.        
    \end{remark}

 \noindent The following technical lemma about the minimal generators of the homological shift ideals of $J(\Gp)$ is repeatedly used in our proofs. 
    
	\begin{lemma}\label{lemma1}
		Let $\alpha\in\G(\hs_k(J(\Gp)))$ such that  $v_i\mid\alpha$ and $w_{ij}\nmid\alpha$  for some $i\in[t]$  and $j\in[r_i]$, where $k$ is any non-negative integer. Then $\alpha'=w_{ij}\frac{\alpha}{v_i}\in\G(\hs_k(J(\Gp)))$.
	\end{lemma}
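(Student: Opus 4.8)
The plan is to use the explicit description of the minimal generators of $\hs_k(J(\Gp))$ from \Cref{remark1}. First I would write $\alpha=\x_\C\x_\sigma$ with $\C$ a minimal vertex cover of $\Gp$, $\sigma\subseteq V_G$, $\C\cap\sigma=\emptyset$ and $|\sigma|=k$. The hypotheses $v_i\mid\alpha$ and $w_{ij}\nmid\alpha$ then force $v_i\in\C$ (since $v_i\notin V_G\supseteq\sigma$) and $w_{ij}\notin\C\cup\sigma$. Applying \Cref{proposition 4} to $v_i$, and using $\N_{\Gp}[v_i]=\{v_i\}\cup A_i$ together with $v_i\in\C$, exactly one vertex of $A_i$ lies outside $\C$; as $w_{ij}\notin\C$, that vertex is $w_{ij}$, so $A_i\cap\C=A_i\s\{w_{ij}\}$. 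Set $\C':=(\C\s\{v_i\})\cup\{w_{ij}\}$. Then $\alpha'=w_{ij}\tfrac{\alpha}{v_i}=\x_{\C'}\x_\sigma$ with $\C'\cap\sigma=\emptyset$, so once I check that $\C'$ is a \emph{minimal} vertex cover of $\Gp$, \Cref{proposition 3} gives $\sigma\subseteq\set_{\Gp}(\C')$, and $\x_{\C'}\x_\sigma$ is then a monomial of the form described in \Cref{remark1}, hence lies in $\G(\hs_k(J(\Gp)))$. Thus everything reduces to proving that $\C'$ is a minimal vertex cover.

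For that I would pass to complements and show that $S:=V_{\Gp}\s\C'=(T\s\{w_{ij}\})\cup\{v_i\}$ is a maximal independent set of $\Gp$, where $T:=V_{\Gp}\s\C$ is the maximal independent set complementary to $\C$ (note $v_i\in\C$ and $w_{ij}\in T$). Independence of $S$ is quick: $T\s\{w_{ij}\}$ is independent, and $\N_{\Gp}(v_i)=A_i$ meets it trivially because $A_i\cap T=\{w_{ij}\}$ by the first paragraph, so adjoining $v_i$ preserves independence. The content is maximality, i.e.\ producing, for each $u\in\C'$, a neighbour of $u$ inside $S$. If $u=w_{ij}$ then $v_i\in S$ works; if $u\in\C\s\{v_i\}$, then maximality of $T$ yields a neighbour $w\in T$ of $u$, and if $w\ne w_{ij}$ we are done since $w\in S$.

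The main obstacle is the remaining case $w=w_{ij}$. Here $u\in\N_{\Gp}(w_{ij})=\{v_i\}\cup\N_G(w_{ij})$ with $u\ne v_i$, so $u\in V_G$; let $A_l$ be the clique of $\pi$ containing $u$. If $l=i$ then $u\in A_i=\N_{\Gp}(v_i)$ and $v_i\in S$ is the desired neighbour. If $l\ne i$, I would apply \Cref{proposition 4} to $v_l$ to obtain the unique vertex $z\in(\{v_l\}\cup A_l)\s\C$; then $z\in T$, and since $\{v_l\}\cup A_l$ is disjoint from $\{v_i\}\cup A_i$ we get $z\notin\{v_i,w_{ij}\}$, hence $z\in S$. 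Finally $z$ is adjacent to $u$ in $\Gp$: if $z=v_l$ because $v_l$ is joined to all of $A_l\ni u$, and if $z\in A_l$ because $A_l$ is a clique and $z\ne u$ (as $u\in\C$ while $z\notin\C$). So $z\in S$ is a neighbour of $u$, which completes the verification that $S$ is maximal, that $\C'$ is a minimal vertex cover, and therefore that $\alpha'\in\G(\hs_k(J(\Gp)))$.
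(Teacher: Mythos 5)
Your proof is correct and follows essentially the same route as the paper: both use \Cref{remark1} and \Cref{proposition 4} to reduce to the same construction $\C'=(\C\setminus\{v_i\})\cup\{w_{ij}\}$ and to the verification that $\C'$ is a minimal vertex cover with $\sigma\subseteq\set_{\Gp}(\C')$. The only cosmetic difference is that you check minimality by showing the complement $S=V_{\Gp}\setminus\C'$ is a maximal independent set and then invoke \Cref{proposition 3}, whereas the paper argues directly that any minimal vertex cover contained in $\C'$ equals $\C'$ and computes $\set_{\Gp}(\C')=\set_{\Gp}(\C)\setminus\{w_{ij}\}$ from \Cref{theorem 1}.
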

	\begin{proof}
		By \Cref{remark1}, $\alpha=\x_{\C}\x_{\sigma}$, where $\C$ is a minimal vertex cover of $\Gp$, and $\sigma\subseteq\set_{\Gp}(\C)$ with $|\sigma|=k$. Since $v_i\mid\alpha$ and $w_{ij}\nmid\alpha$, by \Cref{theorem 1}, we have $v_i\in\C$ and $w_{ij}\notin\C\cup\sigma$. Our first aim is to show that $\C'=(\C\s\{v_i\})\cup\{w_{ij}\}$ is a minimal vertex cover of $\Gp$. Indeed, it is clear from \Cref{proposition 4} that $\C\cap\N_{\Gp}[v_i]=\N_{\Gp}[v_i]\s\{w_{ij}\}$ since $v_i$ is a simplicial vertex of $\Gp$. Hence, $\C'$ is a vertex cover of $\Gp$. Let $\C''\subseteq\C'$ be a minimal vertex cover of $\Gp$. We proceed to show that $\C''=\C'$. Since $\{v_i,w_{is}\}\in E(\Gp)$ for each $s\in[r_i]$, we have that $w_{is}\in\C''$ for each $s\in[r_i]$. Now suppose $w_{lm}\in\C'$ for some $l\in[t]\s\{i\}$ and $m\in[r_l]$. This implies $w_{lm}\in\C$, and hence, by \Cref{proposition 4}, either $v_l\notin\C$ or $w_{lm_1}\notin\C$ for some $m_1\in[r_l]\s\{m\}$. Consequently, either $v_l\notin\C''$ or $w_{lm_1}\notin\C''$, respectively. Since $\{v_l,w_{lm}\},\{w_{lm},w_{lm_1}\}\in E(\Gp)$ we see that $w_{lm}\in\C''$. Similarly, we can show that if $v_p\in\C'$ for some $p\in [t]\s\{i\}$, then $v_p\in\C''$. Thus $\C'=\C''$. \par 
		
	Now it remains to show that $\sigma\subseteq\set_{\Gp}(\C')$. Indeed, it is easy to see from \Cref{theorem 1} that $\set_{\Gp}(\C')=\set_{\Gp}(\C)\s\{w_{ij}\}$. Since $w_{ij}\notin\sigma$, we have $\sigma\subseteq\set_{\Gp}(\C')$, as desired. Thus $\alpha'=\x_{\C'}\x_{\sigma}\in \G(\hs_k(J(\Gp)))$, by \Cref{remark1}. 
		\end{proof}



We are now ready to show that the homological shift ideals of the cover ideal of a Cohen-Macaulay chordal graph have linear quotients. Cohen-Macaulay chordal graphs are combinatorially classified by Herzog-Hibi-Zheng \cite{CMchordal}, and from their result it follows that a chordal graph is Cohen-Macaulay if and only if it is a clique-whiskered graph.

	\begin{theorem}\label{chordal theorem}
		Let $\Gp$ be any Cohen-Macaulay chordal graph. Then for each $k\ge 0$, $\hs_k(J(\Gp))$ has linear quotients with respect to a linear order $>$ on the minimal generators of $\hs_k(J(\Gp))$ which satisfies the following property:
		\begin{align*}
			(*)\,\,\,\,\,\text{If } \alpha\in\G(\hs_k(J(\Gp))) \text{ such that }& v_i\mid\alpha \text{ and } w_{ij}\nmid\alpha \text{ for some } i\in[t] \text{ and } j\in[r_i],\\
				& \text{ then } \alpha'=w_{ij}\frac{\alpha}{v_i}\in\G(\hs_k(J(\Gp))), \text{ and } \alpha'>\alpha.
		\end{align*}
	\end{theorem}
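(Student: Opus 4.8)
The plan is to induct on the number of whiskers $t$ (equivalently, on $|V_G|$), using the Betti splitting of \Cref{proposition 1} with respect to a carefully chosen vertex $w_{ij}$. Since $\Gp$ is Cohen-Macaulay chordal, by the Herzog–Hibi–Zheng classification there is a simplicial vertex; more importantly, a chordal graph being a clique-whiskered graph means we may pick the clique $A_i$ so that $\Gp \setminus \N_{\Gp}[w_{ij}]$ and $\Gp \setminus w_{ij}$ are again Cohen-Macaulay chordal (clique-whiskered) graphs, to which the induction hypothesis applies. The base case $k=0$ is \Cref{theorem 1}, and the case $t=0$ (or more generally when $\hs_k$ vanishes) is trivial.

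First I would fix such a vertex $w:=w_{ij}$ and write, following \Cref{proposition 1},
\[
\hs_k(J(\Gp)) = \underbrace{\x_{\N_{\Gp}[w]}\hs_{k-1}(J(\Gp\s\N_{\Gp}[w]))}_{I_1} + \underbrace{\x_{w}\hs_k(J(\Gp\s w))}_{I_2} + \underbrace{\x_{\N_{\Gp}(w)}\hs_k(J(\Gp\s\N_{\Gp}[w]))}_{I_3}.
\]
By the induction hypothesis each of $\hs_{k-1}(J(\Gp\s\N_{\Gp}[w]))$, $\hs_k(J(\Gp\s w))$, $\hs_k(J(\Gp\s\N_{\Gp}[w]))$ has linear quotients with respect to an order satisfying $(*)$ on its own (smaller) whisker set; I would then lift these orders to $\G(\hs_k(J(\Gp)))$, placing (a suitable interleaving of) $I_1$ and $I_2$ first and $I_3$ last — the order of the blocks being dictated by \Cref{remark 400}, since $\G(I_3)$ is disjoint from both $\G(I_1)$ and $\G(I_2)$ while $\G(I_1)\cap\G(I_2)$ may be nonempty. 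The monomials common to $I_1$ and $I_2$ are exactly those in $\x_{\N_{\Gp}[w]}\hs_k(J(\Gp\s\N_{\Gp}[w]))$ (from the description of the intersection of the two splitting pieces), and the $(*)$-property is precisely what lets me reconcile the two inherited orders on this overlap: a generator $\alpha$ with $v_i \mid \alpha$, $w \nmid \alpha$ forces us to have already listed $\alpha' = w\frac{\alpha}{v_i}$, and $\alpha'$ lives in $I_2$ while $\alpha \in I_3$ or $I_1$.

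The verification of the linear-quotient colon condition then splits into cases according to which block a generator $\beta$ (the "later" generator whose colon ideal we must show is generated by variables) lies in. When $\beta \in I_1$ or $\beta \in I_2$ and the earlier generator $\alpha$ lies in the same block, the colon condition is inherited from the induction hypothesis after cancelling the common multiplier $\x_{\N_{\Gp}[w]}$ or $\x_w$. The genuinely new work is the cross-block comparisons: (i) $\beta \in I_3$ versus $\alpha \in I_1 \cup I_2$, and (ii) $\beta \in I_2$ versus $\alpha \in I_1$ or vice versa on the overlap. Here I would use \Cref{lemma1} and \Cref{theorem 1}'s explicit formula $\set_{\Gp}(\C) = \cup_i(\N_{\Gp}(v_i)\s\C)$: given $\beta = \x_{\C_\beta}\x_{\sigma_\beta}$ with $w \in \N_{\Gp}(\C_\beta)$-type structure and $\alpha$ from an earlier block, I produce a variable $z \in \supp(\alpha/\gcd(\alpha,\beta))$ dividing $\gcd(\alpha,\beta)^{-1}\cdot(\text{some }\alpha'' \text{ earlier than }\beta)$ with $\alpha''/\beta$ a single variable — typically $z = w$ itself, or $z = v_l$ replaced by a $w_{lm}$ via \Cref{lemma1}, or a vertex in $\N_{\Gp}(w)$.

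**The main obstacle** I anticipate is exactly the overlap $\G(I_1)\cap\G(I_2)$ combined with case (i): I must arrange the order so that for every $\beta \in \G(I_3)$ and every earlier $\alpha$, the witnessing variable is available, and simultaneously so that the lifted orders on $I_1$ and $I_2$ agree on their common generators (otherwise the "order" is not well-defined). The $(*)$-property is the device that makes this consistent — it is strong enough to be inherited through the splitting (one checks $(*)$ holds for the lifted order because $w_{ij} \mid \x_{\N_{\Gp}[w]}$ when the offending whisker is the new one, and is inherited otherwise) yet exactly what is needed to push a generator containing $v_l$ (hence in $I_3$ or $I_1$) "up past" its $w_{lm}$-swapped partner in $I_2$. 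Carrying the induction with this extra property in the statement, rather than bare linear quotients, is what makes the argument go through, and pinning down the precise interleaving of $I_1$ and $I_2$ so that $(*)$ is preserved is the delicate point.
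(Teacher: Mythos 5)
Your outline coincides with the paper's proof in every structural respect (induction on the number of vertices carrying the strengthened hypothesis $(*)$, the Betti splitting of \Cref{proposition 1} at a simplicial vertex $w_{11}$ of the underlying chordal graph $G$, a block order with the $\x_{\N_{\Gp}(w_{11})}$-block last, and \Cref{lemma1} plus $(*)$ for the cross-block colons), but it stops short exactly at what you yourself call ``the delicate point,'' and that point is the mathematical core of the theorem. The paper does \emph{not} interleave the first two blocks: writing $a=\x_{\N_{\Gp}[w_{11}]}$, $b=\x_{w_{11}}$, $c=\x_{\N_{\Gp}(w_{11})}$, its order (\Cref{eq 1}) lists all of $a\cdot\hs_{k-1}(J(\Gp\s\N_{\Gp}[w_{11}]))$ first in the inherited order, then only those generators $bg$ of $b\cdot\hs_k(J(\Gp\s w_{11}))$ that do \emph{not} already occur in the $a$-block, then the $c$-block. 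This exclusion is then used decisively in the hardest colon computation, which your sketch does not supply: for $\beta=bg_{\theta_i}$ against $\alpha=af_j$, if some $w_{lq}$ divides $\lcm(\alpha,\beta)/\beta$ one invokes $(*)$ and \Cref{lemma1} to get an earlier $bg_{\theta_{i'}}$ with quotient $w_{lq}$; otherwise the quotient divides $\prod_i v_i$ and one must show $v_1$ divides it, which is proved by contradiction --- if $v_1$ did not, then $\beta$ would itself be a minimal generator of the $a$-block (here one needs $|\sigma\cap\N_{\Gp}[w_{11}]|\le 1$, which uses simpliciality of $w_{11}$ in $G$, together with \Cref{proposition 2}), contradicting that $\beta$ was kept in the $b$-block; finally an explicit earlier generator $af_m$ with quotient exactly $v_1$ is constructed by choosing a minimal cover $\B\subseteq\C\s\N_{\Gp}(w_{11})$ of $\Gp\s\N_{\Gp}[w_{11}]$ and $\tau\subset\sigma$ with $|\tau|=k-1$, $\tau\cap\N_{\Gp}[w_{11}]=\emptyset$, via \Cref{proposition 2} and \Cref{proposition 3}. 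Without specifying the merged order and carrying out this argument, the proof is not complete; saying ``I produce a variable $z$, typically $z=w$ or a $v_l$ swapped by \Cref{lemma1}'' does not address precisely the case where no such variable is available unless $\beta$ has been excluded from the $b$-block.

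A smaller but genuine error: your identification of the common generators of the first two blocks with $\G\bigl(\x_{\N_{\Gp}[w_{11}]}\hs_k(J(\Gp\s\N_{\Gp}[w_{11}]))\bigr)$ cannot be correct, since those monomials have degree one larger than the (equigenerated) generators of $\hs_k(J(\Gp))$. The overlap consists of those generators $\x_{\N_{\Gp}[w_{11}]}\x_{\B}\x_{\tau}$ of the $a$-block admitting a second decomposition $\x_{w_{11}}\x_{\C}\x_{\sigma}$; the paper never needs to describe it explicitly --- it only removes it from the $b$-block and then exploits that removal as above. Note also that simpliciality of $w_{11}$ is needed not just to keep $\Gp\s w_{11}$ and $\Gp\s\N_{\Gp}[w_{11}]$ within the class of Cohen--Macaulay chordal (clique-whiskered) graphs, as you use it, but inside the colon argument itself.
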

	\begin{proof}
		The proof is by induction on $|V_{\Gp}|$. If $k=0$, then by \Cref{theorem 1}, $\hs_0(J(\Gp))$ has linear quotients with respect to the lexicographic order induced from the total order $>$ described in \Cref{eq 200} on ${V_{\Gp}}$. Note that, the total order $>$ on $V_{\Gp}$ also satisfies the condition $(*)$ as seen in \Cref{theorem 1} and \Cref{lemma1}. Therefore, we may assume $k\ge 1$. If $|V_{\Gp}|= 2$, then $J(\Gp)\neq\l 0\r$ only when $\Gp$ is connected, and in that case $k=1$. Hence, by \Cref{theorem 1} and \Cref{remark1}, $\hs_1(J(\Gp))=\l v_1w_{11}\r$, where $V(\Gp)=\{v_1,w_{11}\}$, and thus, $\hs_1(J(\Gp))$ has linear quotients. Now suppose $|V_{\Gp}|\ge 3$. The induced subgraph $G$ is also chordal. Without loss of generality, let $w_{11}$ be a simplicial vertex of $G$. Then by \Cref{proposition 1},
		\begin{align*}
			\hs_k(J(\Gp))=\x_{\N_{\Gp}[w_{11}]}\hs_{k-1}(J(\Gp\s\N_{\Gp}[w_{11}]))+&\x_{w_{11}}\hs_k(J(\Gp\s w_{11}))\\
			&+\x_{\N_{\Gp}(w_{11})}\hs_k(J(\Gp\s\N_{\Gp}[w_{11}])).
		\end{align*}
		Observe that $|V_{\Gp\s\N_{\Gp}[w_{11}]}|<|V_{\Gp}|$, and $|V_{\Gp\s w_{11}}|<|V_{\Gp}|$. Therefore, by the induction hypothesis, all the three ideals $\hs_{k-1}(J(\Gp\s\N_{\Gp}[w_{11}]))$, $\hs_k(J(\Gp\s w_{11}))$, and $\hs_k(J(\Gp\s\N_{\Gp}[w_{11}]))$ have linear quotients with respect to some linear orderings for which the minimal monomial generators of these ideals satisfy the condition $(*)$. Let the linear quotient ordering of the minimal generators of these ideals satisfying the condition $(*)$ be as follows:
		\begin{align*}
			\G(\hs_{k-1}(J(\Gp\s\N_{\Gp}[w_{11}])))&=\{f_1>\dots>f_p\},\\
			\G(\hs_k(J(\Gp\s w_{11})))&=\{g_1>\dots>g_q\},\\
			\G(\hs_k(J(\Gp\s\N_{\Gp}[w_{11}])))&=\{e_1>\dots>e_s\}.
		\end{align*}
		Let $a=\x_{\N_{\Gp}[w_{11}]}$, $b=\x_{w_{11}}$, and $c=\x_{\N_{\Gp}(w_{11})}$. Observe from \Cref{remark 400} that $\G(c\cdot\hs_k(J(\Gp\s\N_{\Gp}[w_{11}])))\cap \G(a\cdot\hs_{k-1}(J(\Gp\s\N_{\Gp}[w_{11}])))=\emptyset$ and $\G(c\cdot\hs_k(J(\Gp\s\N_{\Gp}[w_{11}])))\cap\G(b\cdot \hs_k(J(\Gp\s w_{11})))=\emptyset$. On the other hand, the set $\G(a\cdot\hs_k(J(\Gp\s\N_{\Gp}[w_{11}])))\cap \G(b\cdot\hs_k(J(\Gp\s w_{11})))$ may be non-empty. Let
		\[
		\G(b\cdot\hs_k(J(\Gp\s w_{11})))\s\G(a\cdot\hs_k(J(\Gp\s\N_{\Gp}[w_{11}])))=\{g_{\theta_1}>\cdots>g_{\theta_m}\},
		\]  
        for some $\{\theta_1,\ldots,\theta_m\}\subseteq [q]$. Then we define an ordering on $\G(\hs_k(J(\Gp)))$ as follows:
		\begin{align}\label{eq 1}
		af_1>\cdots>af_p>bg_{\theta_1}>\cdots>bg_{\theta_m}>ce_1>\cdots>ce_s.
		\end{align}
		
		Our first aim is to show that the condition $(*)$ is satisfied for the above ordering. Observe that if $\alpha\in\G(\hs_k(J(\Gp)))$ such that  $v_i\mid\alpha$ and $w_{ij}\nmid\alpha$  for some $i\in[t]$  and $j\in[r_i]$, then by \Cref{lemma1}, we have $\alpha'=w_{ij}\frac{\alpha}{v_i}\in\G(\hs_k(J(\Gp)))$. Thus, it is enough to show that $\alpha'>\alpha$. First, consider the case when $v_i= v_1$. Then $\alpha=bg_{\theta_r}$ or $\alpha=ce_n$, where $r\in[m]$ and $n\in[s]$. In the case $\alpha=bg_{\theta_r}$, we have, by \Cref{lemma1} that $\alpha'=bg_{\theta_{r_1}}$ for some $r_1\in[m]\s\{r\}$ and thus by the induction hypothesis, $\alpha'>\alpha$. On the other hand, if $\alpha=ce_n$, then $\alpha'=bg_{\theta_u}$ for some $u\in [m]$. Thus $\alpha'>\alpha$. Now suppose $v_i\neq v_1$. In this case, either $\alpha=af_l$ or $\alpha=bg_{\theta_{u_1}}$ or $\alpha=ce_n$, where $l\in [p]$, $u_1\in[m]$ and $n\in[s]$. Then by \Cref{lemma1} and considering the variables appearing in the monomials $a,b$ and $c$, one can verify that $\alpha'=af_{l'}$ or $\alpha'=bg_{\theta_{u_1'}}$ or $\alpha'=ce_{n'}$, respectively, where $l'\in [p]$, $u_1'\in[m]$ and $n'\in[s]$. Therefore, by the induction hypothesis, $\alpha'>\alpha$.  
		
		We now proceed to show that with respect to the above ordering, $\G(\hs_k(J(\Gp)))$ has linear quotients. It is easy to see that for each $2\le i\le p$, $\l af_1,\ldots,af_{i-1}\r:af_i=\l f_1,\ldots,f_{i-1}\r:f_i$, and thus by the induction hypothesis the colon ideal is generated by variables. 
		
		\noindent
		{\bf Claim 1}: For each $i\in[m]$, the colon ideal $\l af_1,\ldots,af_p,bg_{\theta_1},\ldots,bg_{\theta_{i-1}}\r:bg_{\theta_i}$ is generated by variables.
		
		\noindent
		\textit{Proof of Claim 1}: Let us first consider the monomial
        \[
h=\frac{\mathrm{lcm}(af_j,bg_{\theta_i})}{bg_{\theta_i}},
        \]
        for some $j\in[p]$ and $i\in[m]$. Let $g_{\theta_i}=\x_{\C}\x_{\sigma}$, where $\C$ is a minimal vertex cover of $\Gp\s w_{11}$, and $\sigma\subseteq\set_{\Gp\s w_{11}}(\C)$ with $|\sigma|=k$. One can see that $|\sigma\cap\N_{\Gp}[w_{11}]|\le 1$. Indeed, $w_{11}\notin\sigma$, and if $\beta,\gamma\in\sigma$ for some $\beta,\gamma\in\N_{\Gp}(w_{11})$, then, by \Cref{theorem 1}  $\beta,\gamma\notin\C$. However, since $w_{11}$ is a simplicial vertex, we have $\{\beta,\gamma\}\in E(\Gp)$, a contradiction to the fact that $\C$ is a vertex cover of $\Gp$.  

        Now, first consider the case when $w_{lq}\mid h$ for some $l\in[t]$ and $q\in[r_l]$. In this case, $w_{lq}\notin\{w_{11}\}\cup\C\cup\sigma$ and hence, by \Cref{proposition 4}, $v_l\in\C$. Thus $w_{lq}\nmid bg_{\theta_i}$ and $v_l\mid bg_{\theta_i}$. Therefore, by \Cref{lemma1} and the ordering described in \Cref{eq 1}, we have $w_{lq}\frac{bg_{\theta_i}}{v_l}=bg_{\theta_{i'}}$, where $bg_{\theta_{i'}}>bg_{\theta_i}$, by $(*)$. Observe that
		\[
		\frac{\lcm(bg_{\theta_{i'}},bg_{\theta_i})}{bg_{\theta_i}}=w_{lq},
		\]		
		where $w_{lq}\mid h$. Therefore, we may assume that $h\mid\prod_{i=1}^tv_i$. Our next aim is to show that $v_1\mid h$. Indeed, if $v_1\nmid h$, then $v_1\in\C$. Therefore, $w_{1l}\notin\C$ for some $l\in[r_1]$. Moreover, since $w_{1l}\nmid h$, we have $w_{1l}\in\sigma$, and hence $|\sigma\cap\N_{\Gp}[w_{11}]|=1$. On the other hand, since $h\mid\prod_{i=2}^tv_i$, we have that $\N_{\Gp}[w_{11}]\subseteq \{w_{11}\}\cup\C\cup\sigma$. Take $\sigma'=\sigma\setminus\{w_{1l}\}$ and $\C'=\C\setminus \N_{\Gp}[w_{11}]$. Then $\{w_{11}\}\sqcup\C\sqcup\sigma=\N_{\Gp}[w_{11}]\sqcup\C'\sqcup \sigma'$. Moreover, since $\hs_k(J(\Gp))$ is an equigenerated monomial ideal, using \Cref{proposition 2} we have that $\C'$ is a minimal vertex cover of $\Gp\setminus\N_{\Gp}[w_{ij}]$. Therefore, $bg_{\theta_i}\in \G(a\cdot\hs_{k-1}(J(\Gp\s\N_{\Gp}[w_{11}])))$, a contradiction due to our chosen ordering in \Cref{eq 1}. Thus, from now onward, we may assume that $v_1\mid h$. 
        
        Again, by our assumption $\N_{\Gp}[w_{11}]\s\{v_1\}\subseteq \{w_{11}\}\cup\C\cup\sigma$ and $v_1\notin\C$.  Now, by \Cref{proposition 2}, $\C\s\N_{\Gp}(w_{11})$ is a vertex cover of $\Gp\s\N_{\Gp}[w_{11}]$. Choose $\B\subseteq\C\s\N_{\Gp}(w_{11})$ such that $\B$ is a minimal vertex cover of $\Gp\s\N_{\Gp}[w_{11}]$. Also, choose $\tau\subset\sigma$ such that $|\tau|=k-1$, and $\tau\cap\N_{\Gp}[w_{11}]=\emptyset$. Then by \Cref{proposition 3}, $\tau\subseteq\set_{\Gp\s\N_{\Gp}[w_{11}]}(\B)$. Consequently, $\x_{\N_{\Gp}[w_{11}]}\x_{\B}\x_{\tau}=af_m$ for some $m\in [p]$. Since $\N_{\Gp}[w_{11}]\s\{v_1\}\subseteq (\{w_{11}\}\cup\C\cup\sigma)$ and $v_1\notin\C$, we have
		\[
		\frac{\lcm(af_{m},bg_{\theta_i})}{bg_{\theta_i}}=v_1,
		\]
		where $v_1\mid h$.

        Next, we consider the monomial
        \[
        h_1=\frac{\lcm(bg_{\theta_{j}},bg_{\theta_i})}{bg_{\theta_i}}=\frac{\lcm(g_{\theta_{j}},g_{\theta_i})}{g_{\theta_i}},
        \]
        for some $j<i$. Since the ideal $\l g_1,\ldots,g_q\r$ has linear quotients with respect to the ordering $g_1>\ldots>g_q$, there exists $\gamma\in [q]$ with $\gamma<\theta_{i}$ such that $\frac{\lcm(g_{\gamma},g_{\theta_{i}})}{g_{\theta_i}}$ is a variable that divides $h_1$. Due to our given ordering, observe that $bg_{\gamma}\in \{af_1,\ldots,af_{p},bg_{\theta_1},\ldots,bg_{\theta_{i-1}}\}$. Hence, the colon ideal $\l af_1,\ldots,af_p,bg_{\theta_1},\ldots,bg_{\theta_{i-1}}\r:bg_{\theta_i}$ is generated by variables. This completes the proof of Claim 1.
		
		\noindent
		{\bf Claim 2}: For each $i\in[s]$, the colon ideal $\l af_1,\ldots,af_p,bg_{1},\ldots,bg_{m},ce_1,\ldots,ce_{i-1}\r:ce_i$ is generated by variables.
		
		\noindent
		\textit{Proof of Claim 2}: Observe that $\l ce_1,\ldots,ce_{i-1}\r:ce_i=\l e_1,\ldots,e_{i-1}\r:e_i$, and hence by the induction hypothesis, this colon ideal is generated by variables. Next, suppose
		\[
		h'=\frac{\lcm(\Delta,ce_i)}{ce_i},
		\]
		where $\Delta\in\{af_1,\ldots,af_p,bg_{1},\ldots,bg_{m}\}$. Then it is easy to see that $w_{11}\mid h'$. Observe that $\alpha=ce_i\in\hs_k(J(\Gp))$, with $w_{11}\nmid \alpha$ and $v_1\mid \alpha$. Hence, by \Cref{lemma1} and the ordering in \Cref{eq 1}, $\alpha'=w_{11}\frac{ce_i}{v_1}\in \hs_k(J(\Gp))$ and $\alpha'>\alpha$, where 
		\[
		\frac{\lcm(\alpha',ce_i)}{ce_i}=w_{11}.
		\]
		This completes the proof of Claim 2 and subsequently, the proof of the theorem.
		\end{proof}

A generalization of polymatroidal ideals is known as weakly polymatroidal ideals, introduced by Kokubo-Hibi \cite{KokubuHibiWeakPoly}. The weakly polymatroidal property is stronger than the linear quotient property; in particular, weakly polymatroidal ideals always have linear quotients \cite[Theorem 1.3]{MohammadiMordaliWeakPoly}. For Cohen-Macaulay Cameron-Walker graphs and certain clique-corona graphs, we will show that the homological shift ideals of their vertex cover ideals are, in fact, weakly polymatroidal. Let us begin by recalling the definition.
        
\begin{definition}[weakly polymatroidal ideal]
		A monomial ideal $I\subset R=\K[x_1,\ldots,x_n]$ is said to be a weakly
		polymatroidal ideal if for any two minimal monomial generators $u=x_1^{\alpha_1}\cdots x_n^{\alpha_n}>_{\lex}v=x_1^{\beta_1}\cdots x_n^{\beta_n}$ with $\alpha_1=\beta_1,\ldots,\alpha_{t-1}=\beta_{t-1}$ and $\alpha_t>\beta_t$ for some $t$, there exists some $j > t$ such that $x_t(v/x_j ) \in\G(I)$. Here, the lexicographic order on the monomials in $R$ is induced from the order of the variables $x_1>x_2>\dots>x_n$.
	\end{definition}
	

    The Cohen-Macaulay Cameron-Walker graphs are combinatorially classified by Hibi-Higashitani-Kimura-O'Keefe in \cite{HibiHigashitaniKimuraCamWalk} as follows.
    \begin{theorem}[{\cite[Theorem 1.3]{HibiHigashitaniKimuraCamWalk}}]\label{thm:cmcw_structure}
        Let $G$ be a Cameron-Walker graph. Then the following are equivalent:
        \begin{enumerate}
            \item $G$ is Cohen-Macaulay;
            \item Each connected component of $G$ is either a $K_2$ or a $K_3$ or consists of a connected bipartite graph with vertex partition $\{1,\ldots,m\}\sqcup \{m+1,\ldots,n\}$ such that there is exactly one pendant triangle attached to each vertex $i\in \{1\ldots,m\}$ and that there is exactly one leaf edge attached to each vertex $j\in \{m+1,\ldots, n\}$.
        \end{enumerate}
            
        \end{theorem}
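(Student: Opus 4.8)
The plan is to prove the cycle of implications $(1)\Rightarrow\text{unmixed}\Rightarrow(2)\Rightarrow(1)$, after a preliminary reduction to connected graphs. For a disjoint union $G=G_1\sqcup G_2$ on disjoint variable sets one has $R/I(G)\cong R_1/I(G_1)\otimes_{\K}R_2/I(G_2)$, so $G$ is Cohen--Macaulay if and only if every connected component of $G$ is; and since the matching number and the induced matching number are both additive over components, being Cameron--Walker is likewise a componentwise condition. Hence it suffices to show that a connected Cameron--Walker graph $G$ is Cohen--Macaulay exactly when it is $K_2$, $K_3$, or a connected bipartite graph carrying exactly one pendant triangle on each vertex of one part of its bipartition and exactly one leaf edge on each vertex of the other part. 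The implication $(1)\Rightarrow\text{unmixed}$ is the standard fact that if $R/I(G)$ is Cohen--Macaulay then it is unmixed, i.e. $G$ is well-covered: all maximal independent sets of $G$ have the same cardinality.

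For the implication $\text{unmixed}\Rightarrow(2)$, I would start from the Cameron--Walker classification of connected graphs with induced matching number equal to matching number: such a graph is a star $K_{1,s}$, a star triangle (that is, $s$ triangles glued along a common vertex), or a connected bipartite graph with bipartition $V_1\sqcup V_2$ having at least one pendant triangle attached to each vertex of $V_1$ and at least one leaf edge attached to each vertex of $V_2$. Then I would prune this list using well-coveredness. If $G=K_{1,s}$, then the singleton consisting of the center is a maximal independent set of size $1$ while the set of all leaves is a maximal independent set of size $s$, so $s=1$ and $G=K_2$. If $G$ is a star triangle with $s$ triangles, then the singleton consisting of the common vertex is a maximal independent set of size $1$ while choosing one non-central vertex from each triangle gives an independent set of size $s$, so $s=1$ and $G=K_3$. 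In the bipartite case, if some $i\in V_1$ had two pendant triangles with apex pairs $\{p,q\}$ and $\{p',q'\}$, then for any maximal independent set $S$ with $i\in S$ (such $S$ exists, being an extension of $\{i\}$), the set $(S\setminus\{i\})\cup\{p,p'\}$ is again independent and has cardinality $|S|+1$, contradicting well-coveredness; hence each vertex of $V_1$ has exactly one pendant triangle. The symmetric argument --- replacing the support vertex of a leaf by its two leaves --- shows each vertex of $V_2$ has exactly one leaf edge. This is precisely the structure described in $(2)$.

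For the implication $(2)\Rightarrow(1)$, I would show that every graph listed in $(2)$ is a clique-whiskered graph in the sense of Cook--Nagel, which by the results recalled in the introduction is vertex decomposable and hence Cohen--Macaulay. Indeed $K_2=K_1^{\pi}$ and $K_3=K_2^{\pi}$; and in the bipartite case, let $H$ be the bipartite core together with one additional vertex $p_i$ joined to each triangle-carrying vertex $i$, equipped with the clique vertex partition whose blocks are the edges $\{i,p_i\}$ (one for each triangle-carrying vertex) together with the singletons $\{j\}$ (one for each leaf-carrying vertex). Then $H^{\pi}$ adjoins to each block $\{i,p_i\}$ a new common neighbor of $i$ and $p_i$ --- producing the pendant triangle at $i$ --- and adjoins to each block $\{j\}$ a new leaf at $j$, so $H^{\pi}$ is exactly the given graph. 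Alternatively one can argue vertex-decomposability directly: the support vertex of any leaf and the base vertex of any pendant triangle are shedding vertices, and deleting such a vertex, or its closed neighborhood, leaves a smaller graph that decomposes as a disjoint union of graphs of the same types, so an induction on $|V_G|$ goes through.

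The main obstacle is conceptual rather than computational: the step $\text{unmixed}\Rightarrow(2)$ rests essentially on the Cameron--Walker combinatorial classification, which is itself a nontrivial theorem; once it is granted, the remaining well-coveredness case analysis is routine. The one place that genuinely needs care is checking, in the bipartite case, that the enlarged set stays independent in all of $G$ and not merely within the attached triangles --- this uses that an apex vertex of a pendant triangle has degree exactly $2$ and a leaf has degree exactly $1$, so its only neighbors lie in the decoration being modified --- and that a maximal independent set through the chosen vertex indeed exists.
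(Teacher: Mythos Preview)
The paper does not prove this statement; it is quoted verbatim from Hibi--Higashitani--Kimura--O'Keefe with a citation and used only as input to \Cref{Cameron-Walker graph}. So there is no in-paper proof to compare against. Your outline is nevertheless a faithful reconstruction of how the original argument goes: Cohen--Macaulay $\Rightarrow$ unmixed, the Cameron--Walker structure theorem reduces the connected case to stars, star triangles, and decorated bipartite graphs, and well-coveredness then pins down the multiplicities. The direction $(2)\Rightarrow(1)$ via the clique-whiskering identification is exactly the observation the present paper makes at the opening of the proof of \Cref{Cameron-Walker graph}, so on that half you and the authors agree.

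One small gap to flag in your unmixed $\Rightarrow(2)$ step: the Cameron--Walker classification allows \emph{zero or more} pendant triangles on each vertex of one side (and at least one leaf on each vertex of the other), not ``at least one'' as you wrote. Your swap arguments correctly rule out two or more triangles and two or more leaves, but you still need to exclude the possibility that some $j\in V_1$ carries \emph{no} pendant triangle. This is not hard---for instance, compare a maximal independent set containing $j$ with one obtained by pushing $j$ out and pulling in decorations on its bipartite neighbours---but it is a genuine extra case that the phrasing of your sketch skips over.
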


    \begin{theorem}\label{Cameron-Walker graph}
        Let $G$ be a Cohen-Macaulay Cameron-Walker graph. Then $\hs_k(J(G))$ is weakly polymatroidal, and thus, has linear quotients for all $k\geq 0$.
    \end{theorem}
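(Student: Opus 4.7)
The plan is to fix a carefully chosen total order on the variables of $R$ and verify the weakly polymatroidal exchange condition directly from the combinatorial description of the minimal generators of $\hs_k(J(G))$. First, I would view $G$ as a clique-whiskered graph using Theorem \ref{thm:cmcw_structure}: in each connected component of bipartite-plus-pendants type, label the pendant triangle at $i\in[m]$ as $\{i,a_i,b_i\}$ and the leaf edge at $j\in\{m+1,\ldots,n\}$ as $\{j,c_j\}$, and take $\{i,a_i\}$ as a clique whiskered by $b_i$ and $\{j\}$ as a singleton clique whiskered by $c_j$; the components isomorphic to $K_2$ or $K_3$ are trivial single-clique special cases. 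Under this identification, \Cref{theorem 1} and \Cref{remark1} describe the minimal generators of $\hs_k(J(G))$ as squarefree monomials $\x_{\C}\x_{\sigma}$ where $\C$ is a minimal vertex cover of $G$, $|\sigma|=k$, and $\sigma\subseteq\bigcup_i(\{i,a_i\}\setminus\C)\cup\bigcup_j(\{j\}\setminus\C)$. A consequence used repeatedly in the analysis is that $\sigma$ never contains a whisker vertex $b_i$ or $c_j$.

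I would then fix a variable order component by component, chosen so that for any pair of minimal generators $u=\x_{\C}\x_{\sigma}>v=\x_{\C'}\x_{\sigma'}$ with first lex-discrepancy at some $x_t$, the combinatorial forcing on $\C'$ via \Cref{proposition 4} and the edge-covering condition always supplies a strictly smaller vertex $x_j\in\C'\cup\sigma'$ whose exchange with $x_t$ produces another element of $\G(\hs_k(J(G)))$. The verification then splits into cases according to the type of $x_t$. For $x_t$ a base vertex, or $x_t=b_i$, or $x_t=c_j$, the forced neighbours of $x_t$ in $\C'$ provide a direct single-variable swap producing a new minimal vertex cover $\C''$ with $\sigma''=\sigma'$; in each subcase one checks that $\set_G(\C'')$ still contains $\sigma'$, using the explicit description $\set_G(\C)=\bigcup_i(\{i,a_i\}\setminus\C)\cup\bigcup_j(\{j\}\setminus\C)$ to track how $\set_G$ changes when exactly one clique vertex or whisker is swapped.

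The delicate case is $x_t=a_i$, for which the natural swap partner $b_i$ may sit on the wrong side of $a_i$ in the chosen order. I would handle it by a two-step adjustment that exchanges an element $y\in\sigma'$ for $a_i$ in $\sigma''$ while simultaneously replacing $b_i$ by $y$ in $\C''$; that such a $y$ exists uses a counting argument comparing $u$ and $v$ on each triangle, where the triangle constraint $|\C\cap\{i,a_i,b_i\}|=2$ together with the agreement of $u$ and $v$ on variables above $x_t$ forces $\sigma'$ to contain a suitable smaller vertex. The main obstacle will be ensuring that this two-step exchange closes up consistently: the new pair $(\C'',\sigma'')$ must simultaneously satisfy $\sigma''\cap\C''=\emptyset$ and $\sigma''\subseteq\set_G(\C'')$, and the placement of $y$ before $b_i$ in the order must be compatible with the swaps needed in the other cases. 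Once all cases are checked, $\hs_k(J(G))$ is weakly polymatroidal, and the linear-quotient conclusion follows from \cite[Theorem 1.3]{MohammadiMordaliWeakPoly}.
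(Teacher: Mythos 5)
Your overall skeleton coincides with the paper's argument (describe $\G(\hs_k(J(G)))$ via \Cref{theorem 1} and \Cref{remark1}, fix a lex order on the vertices, look at the largest discrepancy between two generators, and build the exchange from \Cref{proposition 4} and \Cref{lemma1}), but as written the proposal has real gaps at exactly the points where the work lies. First, you never fix the order: requiring it to be ``chosen so that'' a smaller exchange partner always exists is circular, since exhibiting one concrete order and verifying the bad cases for it is the whole task. Second, the case you classify as an easy direct swap, $x_t=c_j$ (the whisker of a leaf edge), is in fact the hardest one. To bring $c_j$ into the support you must put it into the cover, hence remove the base vertex $j$ from the cover, and this is legitimate only if every bipartite neighbour of $j$ lies in $\C'$; the lex agreement above $x_t$ only forces those neighbours into $\C'\cup\sigma'$, and when some of them lie in $\sigma'$ the swap with $\sigma''=\sigma'$ simply fails to produce a vertex cover. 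Worse, if your order places $j$ above $c_j$ (which your base-vertex case needs, since there you remove the whisker via \Cref{lemma1}), then removing $j$ is not even admissible for weak polymatroidality, because the removed variable must be strictly smaller than $x_t$. Handling this configuration is precisely Subcase~II of the paper's proof: one needs a counting claim producing an element of $\sigma'$ in a strictly later block, \emph{and} a re-representation of $\x_{\C'}\x_{\sigma'}$ as $\x_{\C_1'}\x_{\sigma_1'}$ obtained by exchanging the pairs $w_{q_t1}\leftrightarrow w_{q_t2}$ along the bipartite neighbours of the base vertex missing from $\C'$, before the final exchange can be performed. None of this appears in your sketch, and a symmetric difficulty arises for the base-vertex leaf case if you order the leaf block the other way.

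The ``delicate'' case as you describe it is also broken. For $x_t=a_i$ you have $i,b_i\in\C'$ by \Cref{proposition 4}, and your two-step move (put $a_i$ into $\sigma''$ in place of $y$ and put $y$ into the cover in place of $b_i$) does not give a generator: the edge $\{a_i,b_i\}$ then has neither endpoint in $\C''$, and the resulting monomial is $a_i\,v/b_i$, so the removed variable is $b_i$, which by your own premise lies above $a_i$ in the order and is therefore inadmissible. The correct move is simpler: keep $\C''=\C'$ and exchange inside $\sigma'$, replacing some $y\in\sigma'$ strictly below $a_i$ by $a_i$ (note $a_i\in\set_G(\C')$ automatically since $a_i\notin\C'$). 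But the existence of such a $y$ is exactly what requires the global counting argument of the paper's Claim, comparing $|\sigma|=|\sigma'|=k$ across \emph{all} blocks; ``comparing $u$ and $v$ on each triangle'' does not suffice, because the element you remove from $\sigma'$ generally lives in a different block from $a_i$. So the approach is the right one, but the two genuinely hard steps --- the leaf-edge case with its re-representation, and the existence of a smaller exchange partner in $\sigma'$ --- are respectively misclassified as easy and left unproved.
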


    \begin{proof}
        From the structure of Cohen-Macaulay Cameron-Walker graphs given in \Cref{thm:cmcw_structure}, one can observe that those actually belong to the class of clique-whiskered graphs. If $G$ consists only of $K_2$ or $K_3$ as connected components, then $G$ is a chordal graph, and thus, the result holds by \Cref{theorem 1}. Therefore, without loss of generality, we can assume $G$ is not a disjoint union of several $K_2$ or $K_3$. To keep the notation consistent as used earlier for clique-whiskered graphs, we use the following labeling of the vertices of $G$:
        \begin{align*}
            V_G&=\{w_{i1},w_{i2},v_{i}\mid 1\leq i\leq m\} \sqcup \{w_{j1},v_{j}\mid m+1\leq j\leq n\},\\
            E_G&=E_H\cup \{\{w_{i1},w_{i2}\},\{v_i,w_{i1}\},\{v_i,w_{i2}\}\mid 1\leq i\leq m\}\cup \{\{v_j,w_{j1}\}\mid m+1\leq j\leq n\},
        \end{align*}
        where $H$ is a connected bipartite graph with the vertex partition $\{w_{11},\ldots, w_{m,1}\}\sqcup \{w_{(m+1)1},\ldots,w_{n,1}\}$.
We will show that $\hs_k(J(G))$ is weakly polymatroidal with respect to the lexicographic order induced from the following ordering of the vertices:
\[ w_{11}>w_{12}>v_1>\cdots >w_{m1}>w_{m2}>v_m>w_{(m+1)1}>v_{m+1}>\cdots >w_{n1}>v_n. \]
Let $f,g\in \G(\hs_k(J(G)))$ with $f>_{\lex} g$. Since $J(G)$ has the linear quotient property, by \Cref{remark1}, we can write $f=\x_{\C}\x_{\sigma}$ and $g=\x_{\C'}\x_{\sigma'}$ for some minimal vertex covers $\C$ and $\C'$ of $G$, where $\sigma\subseteq \set_{G}(\C), \sigma'\subseteq \set_{G}(\C')$ with $\vert \sigma\vert=\vert \sigma'\vert=k$. Now, let us consider 
\[z=\max\{a\in V_G\mid a\in (\C\cup \sigma)\setminus (\C'\cup \sigma')\}.\]
Since $f>_{\lex} g$, we have 
\begin{equation}\label{eq:star}\tag{$\star$}
    b\in \C\cup \sigma \iff b\in \C'\cup\sigma' \text{ for each } b>z.
\end{equation}
\noindent \textbf{Case-I:} Let $z=w_{pq}$ for some $p$ and $q$. Then, $w_{pq}\not\in \C'\cup \sigma'$ implies $v_p\in \C'$. In particular, $g\in \G(\hs_k(J(G)))$ such that $v_p\mid g$ and $w_{pq}\nmid g$. Thus, by \Cref{lemma1}, $z(g/v_p)\in \hs_{k}(J(G))$ with $v_p<z=w_{pq}$.

\noindent \textbf{Case-II:} Let $z=v_i$ for some $i\in [n]$. \\
\noindent \textbf{Claim:} There exists $j>i$ such that $w_{jl}\in \sigma'$ for some $l$.\\
\textit{Proof of the claim.} Since $v_i\not\in \C'$, we have $w_{is}\in \C'$ for all possible $s$. Thus, $w_{is}\not\in \sigma'$ for all $s$. Since $w_{is}>v_i$, it follows from \Cref{eq:star} that $w_{is}\in \C\cup \sigma$ for all $s$. Now, $v_{i}\in \C$ implies $w_{it}\not\in \C$ for some $t$, and thus, $w_{it}\in \sigma\setminus \sigma'$ since $w_{it}\in\C'$. Now, we proceed to show that for each $j<i$, if $w_{jl}\in \sigma'$ for some $l$, then $w_{jp}\in \sigma$ for some $p$. Indeed, $w_{jl}\in \sigma'$ implies $v_j\in\C'$. Hence, $v_j\in \C$ by \Cref{eq:star} as $v_j>v_i$. Therefore, $w_{jp}\not\in \C$ for some $p$. Observe that $w_{jl}\in\sigma'$ implies $w_{jp}\in \C'\cup\sigma'$. Thus, again due to \Cref{eq:star}, $w_{jp}\in \sigma$ since $w_{jp}>v_i$, as desired. Now, $\vert \sigma\vert=\vert \sigma'\vert=k$, $w_{it}\in \sigma\setminus \sigma'$, and our previous arguments together imply there exists $j>i$ such that $w_{jl}\in\sigma'\setminus \sigma$ for some $l$. In this situation, let us consider two subcases.\\
\noindent\textbf{Subcase-I:} Suppose $z=v_i$ for some $i\in [m]$. In this case, $w_{i1},w_{i2}\in\C'\setminus \sigma'$. Take $\C''=(\C'\setminus w_{i2})\cup v_{i}$ and $\sigma''=(\sigma'\setminus w_{jl})\cup \{w_{i2}\}$. Then, it is easy to see that $\C''$ is a minimal vertex cover of $G$ and by \Cref{theorem 1}, $\set_{G}(\C'')=\set_{G}(\C')\cup \{w_{i2}\}$. Again, we have 
\[ \sigma''=(\sigma'\setminus w_{jl})\cup \{w_{i2}\}\subseteq (\set_{G}(\C')\setminus \{w_{jl}\})\cup \{w_{i2}\}=\set_{G}(\C'')\setminus \{w_{jl}\}\subsetneq\set_{G}(\C'').\]
Therefore, $\x_{\C''}\x_{\sigma''}=z(g/w_{jl})\in\hs_{k}(J(G))$ with $w_{jl}<z=v_i$.\\
\noindent\textbf{Subcase-II:} Let us assume $z=v_i$ for some $i\in\{m+1,\ldots,n\}$. Then $v_i\not\in \C'$ and $w_{i1}\in\C'\setminus \sigma'$. Let $N_{G}(w_{i1})\setminus \{v_1\}=\{w_{q_{1}1},\ldots,w_{q_{s}1}\}$. Since the graph $H$ is bipartite, from our labeling it follows that $\{q_1,\ldots,q_s\}\subseteq \{1,\ldots,m\}$. Without loss of generality, suppose there exists some $r\leq s$ such that 
\[ \C'\cap \{w_{q_{1}1},\ldots,w_{q_{s}1}\}=\{w_{q_{r+1}1},\ldots,w_{q_{s}1}\}. \]
Then, $w_{q_{1}1},\ldots,w_{q_{r}1}\not\in\C'$. Now, $v_{i}\in \C\setminus \C'$ and $w_{i1}>v_i$ together imply $w_{i1}\in (\sigma \setminus \sigma')\cap (\C'\setminus \C)$ by \Cref{eq:star} and by \Cref{proposition 4}. Note that for each $t\in [r]$, $w_{q_{t}1}\not\in \C'$ implies $v_t,w_{q_{t}2}\in \C'$, and thus, $w_{q_{t}2}\not\in \sigma'$. Now, $v_i\in \C$ gives $w_{i1}\not\in \C$, which further implies $\{w_{q_{1}1},\ldots,w_{q_{s}1}\}\subseteq \C$. Therefore, we have $w_{q_{t}1}\in \C\setminus \C'$ for each $t\in [r]$. Since $w_{q_{t}1}>v_i$, we have $w_{q_{t}1}\in \sigma'$ by \Cref{eq:star} for each $t\in [r]$. In this situation, we consider
\begin{align*}
    \C_{1}'&=(\C'\setminus \{w_{q_{1}2},\ldots, w_{q_{r}2}\})\cup \{w_{q_{1}1},\ldots, w_{q_{r}1}\},\\
\sigma_{1}'&=(\sigma' \setminus \{w_{q_{1}1},\ldots, w_{q_{r}1}\})\cup \{w_{q_{1}2},\ldots, w_{q_{r}2}\}.
\end{align*}
Then one can observe that $\C_{1}'\cup \sigma_{1}'=\C'\cup \sigma'$, $\C_{1}'$ is a minimal vertex cover of $G$, and $\sigma_{1}'\subseteq \set_{G}(\C_{1}')$. Thus, we can view $\x_{\C'}\x_{\sigma'}\in\hs_{k}(J(G))$ as $\x_{\C_{1}'}\x_{\sigma_{1}'}\in\hs_{k}(J(G))$. Now, take
\[ \C_1''=(\C_{1}'\setminus \{w_{i1}\})\cup \{v_{i}\} \text{ and } \sigma_1''=(\sigma_{1}'\setminus \{w_{jl}\})\cup \{w_{i1}\}. \]
It follows from our choice of $\C_1'$ that $N_{G}(w_{i1})\subseteq \C_1''$, and thus, $\C_1''$ is a minimal vertex cover of $G$. Moreover, we have $\sigma_1''\subseteq \set_{G}(\C_1'')$ by \Cref{theorem 1}, and $\vert \sigma_1''\vert=\vert \sigma_{1}'\vert=\vert \sigma'\vert=k$. Hence, $\x_{\C_1''}\x_{\sigma_1''}=v_{i}(g/w_{jl})=z(g/w_{jl})\in\hs_{k}(J(G))$ with $w_{jl}<z=v_i$. This completes the proof.
    \end{proof}

    \begin{definition}[clique corona graphs]\label{def:clique-corona}
		Let $\Gamma$ be a graph on the vertex set $\{w_{i1}\mid i\in[n]\}$. Let $\H=\{K_{t_1},\ldots,K_{t_n}\}$ be a collection of complete graphs. Then the corona product $\Gamma\circ \H$ is a finite simple graph $G$, called a clique corona graph, with the vertex set and edge set as follows. 
		\begin{align*}
			V_G&=V_{\Gamma}\cup(\cup_{i=1}^n V_{K_{t_i}})\\
			E_G&=E_{\Gamma}\cup(\cup_{i=1}^n E_{K_{t_i}})\cup(\cup_{i=1}^n A_i),
		\end{align*}
		 where $A_i=\{\{w_{i1},x\}\mid x\in V_{K_{t_i}}\}$ for each $i\in[n]$. Thus, $G$ is obtained by taking disjoint union of $\Gamma$ and $K_{t_i}$, with additional edges joining each vertex
		 $w_{i1}$ to all vertices $K_{t_i}$. 
	\end{definition}

    \begin{remark}
       From \Cref{def:clique-corona} it is clear that a clique corona graph is also a clique-whiskered graph. Moreover, when each $t_i=1$, the clique corona graph is nothing but a whiskered graph. Thus, due to \Cref{theorem 200}, we see that all the homological shifts ideals of the vertex cover ideal of a clique-corona graph may not have linear quotients. However, if we take each $t_i\geq 2$ in \Cref{def:clique-corona}, then we will show in the next theorem that the linear quotient property holds true. In fact, we show that the corresponding ideals are weakly polymatroidal.
    \end{remark}

    \begin{theorem}\label{clique corona}
        Let $G=\Gamma \circ \H$ be a clique corona graph with $\H=\{K_{t_1},\ldots,K_{t_n}\}$. If $t_i\geq 2$ for each $i\in [n]$, then $\hs_k(J(G))$ is weakly polymatroidal, and thus, linear quotients for all $k\geq 0$.
    \end{theorem}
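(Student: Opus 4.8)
The plan is to adapt, and in fact simplify, the proof of \Cref{Cameron-Walker graph}. First I would record the clique-whiskered structure of $G=\Gamma\circ\H$: choosing for each $i\in[n]$ one vertex of $K_{t_i}$ to play the role of $v_i$ and setting $A_i=\{w_{i1}\}\cup(V_{K_{t_i}}\setminus\{v_i\})$, one checks that $\N_G(v_i)=A_i$, that each $A_i$ is a clique of $G\setminus\{v_1,\dots,v_n\}$, and that $\{A_1,\dots,A_n\}$ is a clique vertex partition of $G\setminus\{v_1,\dots,v_n\}$; hence $G=\Lambda^\pi$ with $\Lambda=G\setminus\{v_1,\dots,v_n\}$, $\pi=\{A_1,\dots,A_n\}$ and $r_i=t_i\ge 2$. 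Writing $A_i=\{w_{i1},w_{i2},\dots,w_{it_i}\}$ with $w_{i1}$ the original vertex of $\Gamma$, the structural fact I would exploit is that $w_{i2},\dots,w_{it_i}$ are ``pure clique'' vertices: $\N_G(w_{il})=\{v_i\}\cup(A_i\setminus\{w_{il}\})$ for $l\ge 2$, so such a vertex can be dropped from any vertex cover that still contains $\{v_i\}\cup(A_i\setminus\{w_{il}\})$. I would then work with the lexicographic order on monomials induced by the vertex order of \Cref{eq 200} (with $t=n$, $r_i=t_i$), and via \Cref{remark1} write each minimal generator of $\hs_k(J(G))$ as $\x_{\C}\x_{\sigma}$ with $\C$ a minimal vertex cover, $\sigma\subseteq\set_G(\C)=\bigcup_{i=1}^n(A_i\setminus\C)$ and $|\sigma|=k$; note that $\sigma$ meets each $A_i$ in at most one vertex and avoids every $v_i$.

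Given $f=\x_{\C}\x_{\sigma}>_{\lex}g=\x_{\C'}\x_{\sigma'}$ in $\G(\hs_k(J(G)))$, let $z=\max\{a:a\in(\C\cup\sigma)\setminus(\C'\cup\sigma')\}$, so that every vertex above $z$ lies in $\C\cup\sigma$ iff it lies in $\C'\cup\sigma'$. If $z=w_{pq}$, then $w_{pq}\notin\C'$ forces $v_p\in\C'$ by \Cref{proposition 4}, and \Cref{lemma1} gives $z\,(g/v_p)\in\G(\hs_k(J(G)))$ with $v_p<z$, which is exactly the weakly polymatroidal condition. If $z=v_i$, then $v_i\notin\C'$ forces $A_i\subseteq\C'$, hence $\sigma'\cap A_i=\emptyset$, and combining with $v_i\in\C$ and the agreement above $z$ I obtain a vertex $w_{it}\in\sigma\setminus\sigma'$. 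A counting argument as in \Cref{Cameron-Walker graph} finishes the bookkeeping: for every block $j<i$, if $\sigma'$ meets $A_j$ then (using $v_j>v_i$, \Cref{proposition 4}, and the agreement above $z$) $\sigma$ also meets $A_j$; since $\sigma$ additionally meets $A_i$ while $\sigma'$ does not, and $|\sigma|=|\sigma'|=k$, there must exist some $j>i$ with $w_{jl}\in\sigma'$ for some $l$.

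To conclude Case II, I would set $\C''=(\C'\setminus\{w_{i2}\})\cup\{v_i\}$ and $\sigma''=(\sigma'\setminus\{w_{jl}\})\cup\{w_{i2}\}$, so that $\x_{\C''}\x_{\sigma''}=v_i\cdot g/w_{jl}=z\,(g/w_{jl})$ with $w_{jl}<z$. Here the hypothesis $t_i\ge 2$ is used decisively: $w_{i2}$ exists and is a pure clique vertex, and since $\{v_i\}\cup(A_i\setminus\{w_{i2}\})\subseteq\C''$ the set $\C''$ is still a vertex cover; as $|\C''|=|\C'|$ and all minimal vertex covers of the Cohen--Macaulay graph $G$ are equicardinal, $\C''$ is minimal. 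One then checks, using \Cref{theorem 1} and the fact that $\C''$ agrees with $\C'$ outside block $i$ (where $A_i\setminus\C''=\{w_{i2}\}$), that $\set_G(\C'')=\set_G(\C')\cup\{w_{i2}\}$, hence $\sigma''\subseteq\set_G(\C'')$ and $|\sigma''|=k$, so $\x_{\C''}\x_{\sigma''}\in\G(\hs_k(J(G)))$ by \Cref{remark1}. This shows $\hs_k(J(G))$ is weakly polymatroidal, and the linear quotient property then follows from \cite[Theorem 1.3]{MohammadiMordaliWeakPoly}.

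The main obstacle is Case II: producing the swapped pair $(\C'',\sigma'')$ and verifying that $\C''$ is still a minimal vertex cover. This is precisely the point at which $t_i\ge 2$ is indispensable: if $t_i=1$ then $A_i=\{w_{i1}\}$ consists only of a vertex of $\Gamma$, which in general cannot be removed from a cover without uncovering an edge of $\Gamma$, and indeed \Cref{theorem 200} shows the statement fails in that case. A secondary, more routine point is the counting that produces the index $j>i$. Note that, unlike in \Cref{Cameron-Walker graph}, no bipartiteness of the base graph $\Gamma$ is needed here, because every $v_i$ lies inside a clique $K_{t_i}$ of size at least $2$ rather than at the end of a whisker, so the delicate ``leaf'' subcase of that proof never arises.
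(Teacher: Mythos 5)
Your proposal is correct and takes essentially the same route as the paper: the paper's proof of \Cref{clique corona} simply reruns the argument of \Cref{Cameron-Walker graph} up to Subcase-I under the lexicographic order on the blocks, and your write-up spells out exactly that argument in the clique-whiskered setting. Your added observation that $t_i\ge 2$ supplies the pure clique vertex $w_{i2}$, so the bipartite ``leaf'' Subcase-II never arises, is precisely the (implicit) reason the paper's reduction works.
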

    \begin{proof}
        Let $V_{\Gamma}=\{w_{11},\ldots,w_{n1}\}$ and $V_{K_{t_i}}=\{w_{i2},\ldots,w_{it_{i}},v_{i}\}$ for each $i\in [n]$. Consider the lexicographic ordering induced by the following:
        \[ w_{11}>w_{12}>\cdots>w_{1t_1}>v_1>\cdots> w_{n1}>w_{n2}>\cdots>w_{nt_n}>v_n. \]
        Then, repeating the same argument up to the Subcase-I of the proof of \Cref{Cameron-Walker graph}, one can see that $\hs_k(J(G))$ is weakly polymatroidal for each $k\geq 0$. Consequently, $\hs_k(J(G))$ has linear quotients for all $k\ge 0$. 
    \end{proof}

    \subsection*{Final Remark} Based on experimental evidence, it was conjectured by Bandari, Bayati, and Herzog that the homological shift ideals of a polymatroidal ideal are again polymatroidal. This conjecture has been proven in several cases: for polymatroidal ideals with strong exchange property \cite{HMRZ2021}, for square-free polymatroidal ideals in \cite{BayatiMatroidal}, for the first homological shift ideal of any polymatroidal ideal in \cite{BandariPolyLinRes, FicarraHomShiftPoly}, and others. However, it remains open in full generality. Naturally, one can ask the same question for weakly polymatroidal ideals, as they generalize polymatroidal ideals. Note that the vertex cover ideals of the graphs considered in \Cref{theorem 200} are square-free weakly polymatroidal ideals (see \cite[Theorem 4.3]{LuWangPowVerCovIdeal}), and from our \Cref{theorem 200} it follows that their homological shift ideals may not be weakly polymatroidal.

    \noindent
{\bf Acknowledgements.} The authors would like to thank Abhay Soman for some helpful comments. Roy is supported by a Postdoctoral Fellowship at the Chennai Mathematical Institute. Saha thanks the National Board for Higher Mathematics (India) for the financial support through the NBHM Postdoctoral Fellowship. Both authors are partially supported by a grant from the Infosys Foundation. 

    \bibliographystyle{abbrv}
\bibliography{ref}
	\end{document}